\DeclareMathAlphabet{\mathpzc}{OT1}{pzc}{m}{it}
\newcommand{\customlabel}[2]{%
   \protected@write \@auxout {}{\string \newlabel {#1}{{#2}{\thepage}{#2}{#1}{}} }%
   \hypertarget{#1}{#2}}
\DeclareSymbolFont{cyrletters}{OT2}{wncyr}{m}{n}
\DeclareMathSymbol{\Sha}{\mathalpha}{cyrletters}{"58}
\def\@tocline#1#2#3#4#5#6#7{\relax
  \ifnum #1>\c@tocdepth 
  \else
    \par \addpenalty\@secpenalty\addvspace{#2}%
    \begingroup \hyphenpenalty\@M
    \@ifempty{#4}{%
      \@tempdima\csname r@tocindent\number#1\endcsname\relax
    }{%
      \@tempdima#4\relax
    }%
    \parindent\z@ \leftskip#3\relax \advance\leftskip\@tempdima\relax
    \rightskip\@pnumwidth plus4em \parfillskip-\@pnumwidth
    #5\leavevmode\hskip-\@tempdima
      \ifcase #1
       \or\or \hskip 1em \or \hskip 2em \else \hskip 3em \fi%
      #6\nobreak\relax
    \dotfill\hbox to\@pnumwidth{\@tocpagenum{#7}}\par
    \nobreak
    \endgroup
  \fi}
\numberwithin{equation}{subsection}
\newtheorem{thmx}{Theorem}
\newtheorem{propx}[thmx]{Proposition}
\numberwithin{equation}{subsection}
\newtheorem{theorem}[subsection]{Theorem}
\newtheorem{lemma}[subsection]{Lemma}
\newtheorem{coro}[subsection]{Corollary}
\newtheorem{conjecture}[subsection]{Conjecture}
\newtheorem{prop}[subsection]{Proposition}
\theoremstyle{definition}
\newtheorem{defn}[subsection]{Definition}
\newtheorem{remark}[subsection]{Remark}
\newtheorem{exam}[subsection]{Example}
\newcommand{\mZ}{\mathbf{Z}}
\newcommand{\mQ}{\mathbf{Q}}
\newcommand{\mF}{\mathbf{F}}
\newcommand{\mA}{\mathbf{A}}
\newcommand{\mP}{\mathbf{P}}
\newcommand{\mE}{\mathbf{E}}
\newcommand{\cO}{\mathcal{O}}
\newcommand{\cN}{\mathcal{N}}
\newcommand{\sE}{\mathcal{E}}
\newcommand{\rra}{\rightarrow}
\newcommand{\iso}{\cong}
\newcommand{\aad}{\text{ and }}
\newcommand{\fps}[1]{\llbracket #1 \rrbracket}
\newcommand{\ZZ}[1]{\mathbf{Z} / #1 \mathbf{Z}}
\newcommand{\uZZ}[1]{(\mathbf{Z} / #1 \mathbf{Z})^{\times}}
\newcommand{\brk}[1]{ \left\lbrace #1 \right\rbrace }
\newcommand{\tth}{^{\text{th}}}
\newcommand{\ext}{\hookrightarrow}
\newcommand{\pwr}[1]{\left( #1 \right)}
\newcommand{\lrra}{\longrightarrow}
\newcommand{\unit}{^{\times}}
\newcommand{\cdef}[1]{{\color{black}\textsf{\textit{#1}}}}
\DeclareMathOperator{\Aut}{Aut}
\DeclareMathOperator{\id}{id}
\DeclareMathOperator{\GL}{GL}
\DeclareMathOperator{\PGL}{PGL}
\DeclareMathOperator{\Tr}{Tr}
\DeclareMathOperator{\cyc}{cyc.}
\DeclareMathOperator{\rank}{rk}
\DeclareMathOperator{\tors}{tors}
\DeclareMathOperator{\Gal}{Gal}
\DeclareMathOperator{\spl}{sp}
\DeclareMathOperator{\nsp}{nsp}
\DeclareMathOperator{\Jac}{J}
\DeclareMathOperator{\Spec}{Spec}
\DeclareMathOperator{\pr}{pr}
\DeclareMathOperator{\Prym}{Prym}
\begin{document}

\title{\Large{C}\small{omposite images of Galois for elliptic curves over $\mQ$} \\ \small{\&}\\  \Large{E}\small{ntanglement fields}}
\author{Jackson S. Morrow}
\address{Department of Mathematics and Computer Science, Emory University,
Atlanta, GA 30322}
\email{jmorrow4692@gmail.com}
\date{\today}
\maketitle
\begin{abstract}
Let $E$ be an elliptic curve defined over $\mQ$ without complex multiplication. For each prime $\ell$, there is a representation $\rho_{E,\ell}\colon \Gal(\overline{\mQ}/\mQ) \rra \GL_2(\ZZ{\ell})$ that describes the Galois action on the $\ell$-torsion points of $E$. Building on recent work of Rouse--Zureick-Brown and Zywina, we find models for composite level modular curves whose rational points classify elliptic curves over $\mQ$ with simultaneously non-surjective, composite image of Galois. We also provably determine the rational points on almost all of these curves. Finally, we give an application of our results to the study of entanglement fields.
\end{abstract}

\section{Introduction}
Let $E$ be an elliptic curve over a number field $K$. For any positive integer $n$, we denote the $n$-torsion subgroup of $E(\overline{K})$, where $\overline{K}$ is a fixed algebraic closure of $K$, by $E[n]$. For a prime $\ell$, let 
\begin{equation*}
E[\ell^{\infty}] \coloneqq \varprojlim_{n\geq 1}E[\ell^n]
\end{equation*}
and
\begin{equation*}
E[\tors] \coloneqq \varprojlim_{n\geq 1} E[n].
\end{equation*}
By fixing a $\widehat{\mZ}$-basis for $E[\tors]$, there is an induced $\ZZ{n}$-basis on $E[n]$ for any positive integer $n$. The absolute Galois group $G_{K} \coloneqq \Gal (\overline{K}/K)$ has a natural action on each torsion subgroup, which respects each group structure. In particular, we have the following continuous representations
\begin{align*}
\rho_{E,n}\colon &G_K  \lrra  \Aut(\ZZ{n}) \iso \GL_2(\ZZ{n})  &(\text{mod }n)&,\\
\rho_{E,\ell^{\infty}} \colon & G_K \lrra  \Aut(E[\ell^{\infty}])  \iso \GL_2(\mZ_{\ell}) &(\ell\text{-adic}),&\\
\rho_{E}\colon & G_K \lrra \Aut(E[\tors]) \iso \GL_2(\widehat{\mZ}) &(\text{ad\'elic}),& 
\end{align*}
where the image under $\rho$ is uniquely determined up to conjugacy in its respective general linear group. The \cdef{$n$-division field} $K(E[n])$ is the fixed field of $\overline{K}$ by the kernel of the mod $n$ representation;  moreover, the Galois group of this number field is the image of the mod $n$ representation.

A celebrated theorem of Serre \cite{serre1972} says that for an elliptic curve over $K$ without complex multiplication (non-CM), the ad\'elic representation $\rho_E$ has open image in $\GL_2(\widehat{\mZ})$. Serre's theorem raised many questions concerning the possible images of the ad\'elic representation. The group $\GL_2(\widehat{\mZ})$ is both a product group and a profinite group via the following isomorphisms
$$\prod_{\ell \text{ prime}} \GL_2(\mZ_{\ell}) \iso \GL_2(\widehat{\mZ}) \iso \varprojlim_n \GL_2(\ZZ{n}).$$
Serge Lang \cite{lang1987elliptic} referred to these two characterizations as the \textit{horizontal} and \textit{vertical} natures of $\GL_2(\widehat{\mZ})$, respectively, and this binal nature of $\GL_2(\widehat{\mZ})$ provides two flavors of questions stemming from Serre's work.

Horizontally speaking, for any non-CM elliptic curve over $K$, there exists a smallest integer $r_{E/K} >0$ such that for all $\ell\geq r_{E/K}$ , the $\ell$-adic representation is surjective. Serre asked if $r_{E/K}$ depends only on $K$, and whether $r_{E/\mQ} = 37$. In \cite{zywina2011surjectivity}, Zwyina gave a refined conjecture concerning the surjectivity of the mod $\ell$ image and provided a practical algorithm (implemented in Sage) to compute the finite set of primes $\ell$ for which $\rho_{E,\ell}$ is not surjective; a prime $\ell$ is called \cdef{exceptional} if it belongs to this finite set.

Vertically speaking, one interesting question is to determine when the ad\'elic image is surjective. Serre showed that the ad\'elic image is always contained in some index 2 subgroup of $\GL_2(\widehat{\mZ})$ for $E$ defined over $\mQ$.  Greicius \cite{greicius2010} found necessary and sufficient abstract conditions on a number field $L$ for which $\rho_E$ could be surjective. Building on previous work of Duke \cite{duke1997elliptic} and Jones \cite{jones2010almost}, Zywina \cite{zywina10maximal,zywina2010hilbert} proved that for a number field $L\neq \mQ$ such that $L\cap \mQ^{\cyc} = \mQ$, almost all elliptic curves over $L$ (in the sense of density) have surjective, ad\'elic image.

The vertical variant also leads us to ask for the possible values for the index of the ad\'elic image for a given non-CM elliptic curve. This question is the focus of \cite[Program~B]{mazur1977rational}. In particular, given an open subgroup $H \subset \GL_2(\widehat{\mZ})$, this program strives to classify all elliptic curves $E/K$ such that the image of $\rho_E$ is contained in $H$. The work of this program suggests that there exists a constant $B(K)$ such that for every elliptic curve $E/K$ without complex multiplication, the index of $\rho_{E}(G_K)$ in $\GL_2(\widehat{\mZ})$ is bounded by $B(K)$.

To determine $\rho_E(G_K)$, one begins by computing the $\ell$-adic image $\rho_{E,\ell^{\infty}}$ for each prime $\ell$, which leads to the following inclusions
\begin{equation*}
\rho_{E}(G_K) \ext \prod_{\ell \text{ prime}}\rho_{E,\ell^{\infty}}(G_K) \subseteq \prod_{\ell \text{ prime}}\GL_2(\mZ_{\ell}).
\end{equation*}
The image of $\rho_{E}(G_K)$ under the above inclusion will project onto each $\ell$-adic factor, and so 
a natural first step in Mazur's Program B is to classify the $\ell$-adic image of Galois. 

We briefly recall recent progress in Mazur's Program B. 
Zywina \cite{zywinapossible} has described all known, and conjecturally all, pairs $(E,\ell)$ such that $\rho_{E,\ell}(G_{\mQ})$ is non-surjective. Rouse and Zureick-Brown \cite{rouse2014elliptic} provided a complete list of the 1208 possible 2-adic Galois representations associated to non-CM elliptic curves over $\mQ$. 
Sutherland and Zywina \cite{sutherland5modular} also determined all of the prime power level modular curves $X_G$ for which $X_G(\mQ)$ is infinite.
In all of these works, the authors give rational functions whose values correspond to $j$-invariants of non-CM elliptic curves over $\mQ$ with image of Galois conjugate to a subgroup of $G$ in the appropriate general linear group. The computations of these rational functions occupy the majority of these works. 

In this paper, we investigate the \cdef{composite-$(m_1,m_2)$ image} $(\rho_{E,m_1} \times \rho_{E,m_2})(G_{K})$ for $m_1,m_2$ relatively prime.  
Let $\ell$ be a prime; let $G_{n,\ell}\subset \GL_2(\ZZ{\ell})$ be a proper subgroup which arises as an image of $\rho_{E,\ell}(G_{\mQ})$ and contains $-I$ (these subgroups come from \cite{zywinapossible}, and we produce the list of these subgroups $G_{n,\ell}$ in Appendix \ref{App:AppendixA}; here $\ell$ refers to the level of the group and $n$ is simply an index); and let $H_i\subset \GL_2(\ZZ{2^m})$ be a proper subgroup which arises as an image of $\rho_{E,2^{\infty}}(G_{\mQ})$ and contains $-I$ coming from \cite[\texttt{gl2data.txt}]{RZB}. 
Using the rational functions corresponding to the $j$-maps of the modular curves $X_{H_i}(2^m)$ and $X_{G_{n,\ell}}(\ell)$, construct the fibered product
$$
\begin{tikzcd}
X' \arrow{r} \arrow{d} & X_{G_n}(\ell) \arrow{d}{j(G_{n,\ell})} \\
X_{H_i}(2^m) \arrow[swap]{r}{j(H_i)} & \mP_{\mQ}^1.
\end{tikzcd}
$$
We define the \cdef{composite-$(2^m,\ell)$ level} modular curve $X_{H_i,G_{n,\ell}}(2^m\cdot \ell)$ to be the normalization of the fibered product $X'$. 
The aforementioned $j$-map equations allow us to readily find equations for $X'$, but this curve is usually singular, which necessitates taking a normalization.

The $\mQ$-points on $X_{H_i,G_{n,\ell}}(2^m\cdot \ell)$ correspond to elliptic curves $E$ over $\mQ$ with composite-$(2^m,\ell)$ image conjugate to some subgroup of $H_i \times G_{n,\ell} \subset \GL_2(\ZZ{2^m}) \times \GL_2(\ZZ{\ell}) \iso \GL_2(\ZZ{2^m\cdot \ell})$ via the chinese remainder theorem. Succinctly, these rational points classify elliptic curves over $\mQ$ with simultaneously non-surjective, composite-$(2^m,\ell)$ image of Galois. 

\subsubsection*{Notation} Before we state our main results, we set some notation for specific subgroups of $
\GL_2(\ZZ{\ell})$. Let $C_{\spl}(\ell)$ be the subgroup of diagonal matrices. Let $
\epsilon = -1$ if $\ell \equiv 3 \pmod 4$ and otherwise let $\epsilon \geq 2$ be the 
smallest integer which is not a quadratic residue modulo $\ell$. Let $C_{\nsp}(\ell)$ be the subgroup consisting of matrices of the form $\begin{psmallmatrix} a & b\epsilon \\ b & a \end{psmallmatrix}$ with $(a,b) \in \ZZ{\ell}^2 \setminus \brk{(0,0)}$. Let 
$N_{\spl}(\ell)$ and $N_{\nsp}(\ell)$ be the normalizers of $C_{\spl}(\ell)$ and $C_{\nsp}(\ell)$, respectively, in $\GL_2(\ZZ{\ell})$. We have $[N_{\spl}(\ell) : C_{\spl}(\ell)] = 2$ and the non-identity coset of $C_{\spl}(\ell)$ in $N_{\spl}(\ell)$ is represented by $\begin{psmallmatrix} 0 & 1 \\ 1 & 0 \end{psmallmatrix}$. We have $[N_{\nsp}(\ell) : C_{\nsp}(\ell)] = 2$ and the non-identity coset of $C_{\nsp}(\ell)$ in $N_{\nsp}(\ell)$ is represented by $\begin{psmallmatrix} 1 & 0 \\ 0 & -1 \end{psmallmatrix}$. Let $B(\ell)$ be the subgroup of upper triangular matrices in $\GL_2(\ZZ{\ell})$.

\subsection{Statement of results}
In this paper, we study the possible composite-$(2^m,3)$ for $m=1,2,3,4$ and composite-$(2,\ell)$ for $\ell=5,7,11,13$ images of Galois associated non-CM elliptic curves over $\mQ$.

\begin{thmx}\label{2.2}
Let $E/\mQ$ be a non-CM elliptic curve.
\begin{enumerate}
\item If the composite-$(2,3)$ image of $E$ is simultaneously non-surjective, then the image is conjugate to a subgroup of one of the following subgroups of $\GL_2(\ZZ{6})$:
$$\brk{ \ref{G223} \times \ref{G33},\,  \ref{G222} \times \ref{G23},\,  \ref{G222} \times \ref{G13},\,  \ref{G222} \times \ref{G33},\, \ref{G222} \times \ref{G43},\, \ref{G221} \times \ref{G33}}.$$  \label{2.2.1}
\item If the composite-$(4,3)$ image of $E$ is simultaneously non-surjective, then the image is conjugate to a subgroup of one of the following subgroups of $\GL_2(\ZZ{12})$:
$$\brk{ H_9 \times \ref{G33},\,  H_{10} \times \ref{G33},\,  H_{11}\times\ref{G43},\,H_{12}\times\ref{G43},\,H_{13} \times \ref{G33}}.$$ \label{2.2.2}
\item If the composite-$(8,3)$ image of $E$ is simultaneously non-surjective, then the image is conjugate to a subgroup of one of the following subgroups of $\GL_2(\ZZ{24})$:
$$\brk{ H_{30} \times \ref{G43},\, H_{31} \times \ref{G43},\, H_{39}\times\ref{G43},\,H_{45}\times\ref{G43},\,H_{47} \times \ref{G43},\,  H_{50} \times \ref{G43} }.$$ \label{2.2.3}
\item If the composite-$(16,3)$ image of $E$ is simultaneously non-surjective, then the image is conjugate to a subgroup of one of the following subgroups of $\GL_2(\ZZ{48})$:
$$\brk{ 
\begin{array}{ll}
H_{103} \times \ref{G43},\,  H_{104} \times \ref{G43},\,  H_{105} \times \ref{G43},\, H_{107}\times\ref{G43},\,H_{110}\times\ref{G43},\,H_{112}\times\ref{G43}, \\
H_{113}\times\ref{G43} ,\,H_{114}\times\ref{G43} ,\,H_{150}\times\ref{G43} ,\,H_{153}\times\ref{G43},\,H_{165}\times\ref{G43}   ,\,H_{166}\times\ref{G43}  
\end{array}
}.$$ \label{2.2.4}
\end{enumerate}
\end{thmx}

\begin{propx}\label{C2.2}
Let $E/\mQ$ be a non-CM elliptic curve.
\begin{enumerate}
\item  It occurs infinitely often that the index of $(\rho_{E,2} \times \rho_{E,3})(G_{\mQ})$ in $\GL_2(\ZZ{6})$ is either 4, 8, 9, 12, 18, or 36 .
\item  It occurs infinitely often that the index of $(\rho_{E,4} \times \rho_{E,3})(G_{\mQ})$ in $\GL_2(\ZZ{12})$ divides 18 or 24.
\item  It occurs infinitely often that the index of $(\rho_{E,8} \times \rho_{E,3})(G_{\mQ})$ in $\GL_2(\ZZ{24})$ divides 36.
\item  It occurs infinitely often that the index of $(\rho_{E,16} \times \rho_{E,3})(G_{\mQ})$ in $\GL_2(\ZZ{48})$ divides 72.
\end{enumerate}
\end{propx}

By restricting our attention to non-CM elliptic curves $E$ with a specified mod 2 image of Galois, we can prove additional results on the composite-$(2,\ell)$ image for $\ell=5,7,11,13$. 

\begin{thmx}\label{2.1}
Let $E/\mQ$ be a non-CM elliptic curve. Suppose that $\rho_{E,2}(G_{\mQ})$ conjugate to $\ref{G223}$ i.e., the discriminant of $E$ is a square. 
\begin{enumerate}
\item If the composite-$(2,5)$ image of $E$ is simultaneously non-surjective, then the image is conjugate to a subgroup of $\ref{G223}\times \ref{G59}$ in $\GL_2(\ZZ{10})$. \label{2.1.1}
\item If the composite-$(2,7)$ image of $E$ is simultaneously non-surjective, then the image is conjugate to a subgroup of $\ref{G223}\times \ref{G77}$ in $\GL_2(\ZZ{14})$. \label{2.1.2}
\item If the composite-$(2,11)$ image of $E$ is simultaneously non-surjective, then the image is conjugate to a subgroup of $\ref{G223}\times \ref{G113}$ in $\GL_2(\ZZ{22})$. \label{2.1.3}
\item If the composite-$(2,13)$ image of $E$ is simultaneously non-surjective, then the image is conjugate to a subgroup of $\ref{G223}\times \ref{G137}$ in $\GL_2(\ZZ{26})$. \label{2.1.4}
\end{enumerate}
\end{thmx}

\begin{propx}\label{C2.1}
Let $E/\mQ$ be a non-CM elliptic curve. Suppose that $\rho_{E,2}(G_{\mQ})$ conjugate to $\ref{G223}$ i.e., the discriminant of $E$ is a square. 
\begin{enumerate}
\item  It occurs infinitely often that the index of $(\rho_{E,2} \times \rho_{E,5})(G_{\mQ})$ in $\GL_2(\ZZ{10})$ divides 10.
\item  It occurs infinitely often that the index of $(\rho_{E,2} \times \rho_{E,7})(G_{\mQ})$ in $\GL_2(\ZZ{14})$ divides 16.
\item  It occurs finitely often that the index of $(\rho_{E,2} \times \rho_{E,11})(G_{\mQ})$ in $\GL_2(\ZZ{22})$ divides 110.
\item  It occurs finitely often that the index of $(\rho_{E,2} \times \rho_{E,13})(G_{\mQ})$ in $\GL_2(\ZZ{26})$ divides 182.
\end{enumerate}
\end{propx}

\subsection{Sketch of proof}
The first step in the proofs of Theorems \ref{2.2} and \ref{2.1} is to find models for the composite level modular curves corresponding to the subgroups coming from Rouse--Zureick-Brown \cite{rouse2014elliptic} and Zywina \cite{zywinapossible}. 
Once we have the models for these modular curves, we determine their $\mQ$-points.
The analysis of rational points on this collection of modular curves involves a variety of techniques, which we discuss in Sections \ref{S5} and execute in Sections \ref{sec:gen2}, \ref{sec:gen3}, and \ref{sec:HighGen}. 
The \texttt{Magma} code verifying claims made in these sections can be found at \cite{JSM2017} as well as diagrams summarizing the results of Theorem~\ref{2.2}.

\subsection{Organization of paper} In Section \ref{SBack}, we give a synopsis of the necessary background on modular curves of prime power level. 
In Section \ref{S4}, we construct models for our composite level modular curves.
In Section \ref{S5}, we explain the techniques used to determine these rational points. The subsequent Sections \ref{sec:gen2}, \ref{sec:gen3}, and \ref{sec:HighGen} provide further details of this analysis for curves of increasing genera. 
We conclude in Section \ref{AppEnt} by applying our results to the study of entanglement fields.
In Appendix \ref{App:AppendixA}, we recall relevant background and introduce notation from \cite{zywinapossible}, which we use throughout.

\subsection{Acknowledgments}
This work clearly owes a debt to Rouse--Zureick-Brown \cite{rouse2014elliptic} and Zywina \cite{zywinapossible}.
The author would like to graciously thank his advisor, David Zureick-Brown, for suggesting the problem and for the multitude of helpful conversations on the topic. 
The author extends his thanks to Jeremy Rouse for useful discussions, to Nils Bruin for supplying the proof in Section~\ref{Chab}, and to Maarten Derickx and Lea Beneish for constructive comments on an earlier draft. 
The computations in this paper were performed using the \texttt{Magma} computer algebra system \cite{MR1484478}. The author would also like to thank the referee for their thoughtful comments.

\section{Background}\label{SBack} 
For a subgroup $G \subset \GL_2(\ZZ{\ell})$ with $\det (G) = \ZZ{\ell}\unit$ and $-I \in G$, we can associate to it a modular curve $X_G$, which is a smooth, projective, and geometrically irreducible curve over $\mQ$. It comes with a natural morphism 
\begin{equation*}
\pi_G\colon X_G \lrra \Spec \mQ[j] \cup \brk{\infty} \eqqcolon \mP^1_{\mQ},
\end{equation*}
such that for an elliptic curve $E/\mQ$ with $j_E \notin \brk{0,1728}$, the group $\rho_{E,\ell}(G_{\mQ})$ is conjugate to a subgroup of $G$ if and only if the $j_E = \pi_G(P)$ for some rational point $P \in X_G(\mQ)$. The modular curves $X_G$ of genus 0 with $X_G(\mQ) \neq \emptyset$ are isomorphic to the projective line, and for each such curve, function field is of form $\mQ(h)$ for some modular function $h$ of level $\ell$. Giving the morphism $\pi_G$ is then equivalent to expressing the modular j-invariant in the form $J(h)$.

We now describe a set of necessary conditions on the possible non-surjective images of $\rho_{E,n}(G_{\mQ})$, where $n \geq 2$.

\begin{defn}\label{3.1}
A subgroup $G$ of $\GL_2(\ZZ{n})$ is \cdef{applicable} if it satisfies the following conditions:
\begin{itemize}
\item $G \neq \GL_2(\ZZ{n})$,
\item $-I \in G$ and $\det (G) = \uZZ{n}$,
\item $G$ contains an element with trace 0 and determinant -1 that fixes a point in $(\ZZ{n})^2$ of order $n$.
\end{itemize}
\end{defn}
\begin{prop}[\protect{\cite[Proposition~2.2]{zywinapossible}}] \label{3.2}
Let $E$ be an elliptic curve over $\mQ$ for which $\rho_{E,n}(G_{\mQ})$ is not surjective. Then $\pm \rho_{E,n}(G_{\mQ})$ is an applicable subgroup of $\GL_2(\ZZ{n})$.
\end{prop}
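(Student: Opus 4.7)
The plan is to verify each of the three bullet points in Definition 3.1 for the group $G := \pm\rho_{E,n}(G_{\mQ}) = \langle \rho_{E,n}(G_{\mQ}),\, -I\rangle$. The inclusion $-I \in G$ is built into the definition of $G$, so two of the four conditions in the Definition reduce immediately to showing the determinant surjectivity together with the complex-conjugation constraint.

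First I would dispatch $\det(G) = \uZZ{n}$ by recalling that the Weil pairing $e_n \colon E[n]\times E[n]\to \mu_n$ is Galois-equivariant, so $\det \circ \rho_{E,n}$ is the mod-$n$ cyclotomic character $\chi_n \colon G_{\mQ}\to \uZZ{n}$; since $[\mQ(\zeta_n):\mQ]=\varphi(n)$, the character $\chi_n$ is surjective. Next I would produce the required element of trace $0$ and determinant $-1$ with an order-$n$ fixed point from complex conjugation $c\in G_{\mQ}$. The identity $\chi_n(c)=-1$ gives $\det \rho_{E,n}(c)=-1$, and $c^2=1$ forces the eigenvalues of $\rho_{E,n}(c)$ to be $\{1,-1\}$, so $\operatorname{tr}\rho_{E,n}(c)=0$. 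To supply the fixed point, I would use that the identity component of the real Lie group $E(\mR)$ is isomorphic to $S^1$, hence contains a point $P\in E(\mR)$ of exact order $n$; this $P$ lies in $E[n]\cong(\ZZ{n})^2$, has order $n$, and is fixed by $c$ because it is real.

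The remaining condition $G\neq \GL_2(\ZZ{n})$ is the only nontrivial point, since passing from $\rho_{E,n}(G_{\mQ})$ to $\pm\rho_{E,n}(G_{\mQ})$ can in principle halve the index. For $n=2$ we have $-I=I$, so $G=\rho_{E,2}(G_{\mQ})$ is proper by hypothesis. For $n\geq 3$, the only way $G$ could equal $\GL_2(\ZZ{n})$ is if $H:=\rho_{E,n}(G_{\mQ})$ has index $2$ and does not contain $-I$. I would rule this out by exhibiting a character $\chi\colon \GL_2(\ZZ{n})\to \{\pm 1\}$ with $\ker\chi = H$ and $\chi(-I)=-1$ and then showing $\chi\circ\rho_{E,n}$ must be trivial (since its kernel contains $\rho_{E,n}^{-1}(H)=G_{\mQ}$), reaching a contradiction. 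The contradiction comes from the fact that every character of $\GL_2(\ZZ{n})\to\{\pm 1\}$ factors through the abelianization, which is controlled by $\det$ at each prime-power factor of $n$; combining this with $\chi_n(c)=-1$ and $\chi(-I)=-1$ forces $\chi$ to be the quadratic twist of the determinant, whose composition with $\rho_{E,n}$ is a nontrivial quadratic Dirichlet character of $G_{\mQ}$.

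The main obstacle I expect is the last paragraph: bookkeeping the characters of $\GL_2(\ZZ{n})$ for composite $n$, where the abelianization is not as clean as for prime $n$. I would handle this by reducing to prime-power components via the Chinese remainder theorem and invoking the standard computation that $\GL_2(\ZZ{p^k})^{\mathrm{ab}}$ is generated by $\det$ for $p\geq 3$, while for $p=2$ one additionally checks directly that every index-$2$ subgroup of $\GL_2(\ZZ{2^k})$ containing an element of determinant $-1$ and order $2$ already contains $-I$.
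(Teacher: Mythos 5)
The paper itself gives no proof of this statement --- it is quoted with a citation to Zywina, whose Proposition~2.2 is stated and proved for prime level $\ell$ --- so your write-up is in effect supplying the composite-level argument that the paper leaves implicit. Your treatment of the second and third bullets of Definition~\ref{3.1} is the standard one and is fine: $\det\circ\rho_{E,n}$ is the mod-$n$ cyclotomic character, and complex conjugation $c$ together with a point of exact order $n$ on $E(\mR)^{0}\cong S^{1}$ produces the required element. (Your ``eigenvalues $\{1,-1\}$'' phrasing is loose over $\ZZ{n}$; the clean justification is Cayley--Hamilton: $\rho_{E,n}(c)^{2}=I$ and $\det=-1$ give $\Tr(\rho_{E,n}(c))\cdot\rho_{E,n}(c)=0$, and an invertible $2\times 2$ matrix over $\ZZ{p^{k}}$ has a unit entry, so the trace vanishes; then apply CRT.)

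The soft spot is the properness step. Your intermediate claim that the hypothetical quadratic character $\chi$ with $\ker\chi=\rho_{E,n}(G_{\mQ})$ ``must be the quadratic twist of the determinant'' is false whenever $n$ is even: $\GL_2(\ZZ{2^{k}})$ has quadratic characters that do not factor through $\det$ (already the sign character of $\GL_2(\mF_2)\cong S_3$), and for $n=2^{k}m$ the character can mix a determinant character on the odd part with a non-determinant character on the $2$-part, in which case the Dirichlet-character contradiction evaporates --- Serre's index-$2$ entanglement $\mQ(\sqrt{\Delta_E})\subseteq \mQ(E[2])\cap\mQ(\zeta_m)$ shows that such compositions genuinely can be trivial on $G_{\mQ}$. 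Fortunately, your own fallback check for $p=2$ is the right idea, and once formulated correctly it closes the argument uniformly and renders the arithmetic detour unnecessary: the reduction $\SL_2(\mZ)\twoheadrightarrow\SL_2(\ZZ{n})$ is surjective and $\SL_2(\mZ)^{\ab}\cong\ZZ{12}$ with $-I$ mapping to the class of $6$, so $-I$ lies in the kernel of every homomorphism from $\SL_2(\ZZ{n})$ to $\{\pm1\}$, hence in the kernel of every quadratic character of $\GL_2(\ZZ{n})$. Thus \emph{every} index-$2$ subgroup of $\GL_2(\ZZ{n})$ contains $-I$; since $\pm H=\GL_2(\ZZ{n})$ with $H$ proper would force $[\GL_2(\ZZ{n}):H]=2$ and $-I\notin H$, this situation cannot occur, purely group-theoretically and with no input from $E$. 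With that substitution your proof is complete.
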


Proposition \ref{3.2} gives necessary conditions for when a proper subgroup of $\GL_2(\ZZ{n})$ can occur as the image of Galois, and hence reduces a part of the problem to a group theoretic computation. From here, Zywina constructs the modular curves corresponding to these subgroups and classifies the rational points on them. This result gives a conjecturally complete description of the \textit{horizontal} flavored question concerning the mod $\ell$ representations. We recall the applicable subgroups $G_{n,\ell}$ of prime level $\ell$ as well as the $j$-map for their associated modular curve $X_{G_{n}}(\ell)$ in Appendix~\ref{App:AppendixA}.
Unless otherwise stated, any subgroup $G_{n,\ell}$ of $\GL_2(\ZZ{\ell})$ will be applicable and come from this list.

In \cite{rouse2014elliptic}, Rouse and Zureick-Brown consider the \textit{vertical} flavored question through their study of the $2$-adic images. The authors determine the possible 2-adic images of Galois by finding all the rational points on the ``tower" of 2-power level modular curves. For a subgroup $H$ of $\GL_2(\widehat{\mZ})$ and an integer $n$ such that $H$ contains the kernel of the reduction map $\GL_2(\widehat{\mZ}) \rra \GL_2(\ZZ{n})$, the authors define $X_H$ to be the quotient of the modular curve $X(n)$ by the image $H(n)$ of $H$ in $\GL_2(\ZZ{n})$. This quotient roughly classifies elliptic curves whose ad\'elic image of Galois is contained in $H$. Furthermore, the authors describe a necessary condition on the $\ell$-adic image.
\begin{defn}\label{3.3}
A subgroup $H \subset \GL_2(\mZ_{\ell})$ is \cdef{arithmetically maximal} if
\begin{enumerate}
\item[$\bullet$]  $\det \colon H \rra \mZ_{\ell}\unit$ is surjective,
\item[$\bullet$]  there is an $M \in H$ with determinant $-1$ and trace zero, and 
\item[$\bullet$]  there is no subgroup $K$ with $H \subseteq K$ so that $X_{K}$ has genus $\geq 2$. 
\end{enumerate}
\end{defn}

Rouse and Zureick-Brown give an equivalent statement to that in Proposition \ref{3.2}. In particular if $E/\mQ$ is an elliptic curve and $H = \rho_{E,2^{\infty}}(G_{\mQ})$, then $H$ is contained in an arithmetically maximal subgroup. The authors determine that there exist 727 arithmetically maximal subgroups of $\GL_2(\mZ_2)$ and give a beautifully detailed diagram of these subgroups (see \cite[Figure~1]{rouse2014elliptic}). 
As above, let $H_i$ denote the $i\tth$ subgroup in their list (as given in \cite[\texttt{gl2data.txt}]{RZB}) and $j(H_i)$ its respective $j$-map; the level of $H_i$ will be clear from context.

\section{Composite level modular curves}\label{S4}
In this section, we discuss models for our composite level modular curves. Recall that the composite-$(2^m,\ell)$ level modular curve is the normalization of the fibered product $X_{G_n}(\ell) \times_{\mP^1_{\mQ}} X_{H_i}(2^m)$ where the maps to $\mP^1_{\mQ}$ are the $j$-maps $j(G_{n,\ell})$ and $j(H_i)$ of $X_{G_n}(\ell)$ and $X_{H_i}(2^m)$, respectively.

\subsection{Models for Theorem \ref{2.2}}\label{M2.2}
In the proof of Theorem \ref{2.2}, we build the ``tower" of $(2^n\cdot 3)$-power level modular curves. First, we compute the rational points on the level $6$ modular curves, which acts as the foundation of our tower. If the subgroup $H\times G \subset \GL_2(\ZZ{2}) \times \GL_2(\ZZ{3}) \iso \GL_2(\ZZ{6})$ occurs as a composite image of Galois, then we find the subgroups of level $4$ from \cite[\texttt{gl2data.txt}]{RZB} that cover $H$ (e.g.,~that contain $H$ in the kernel of reduction).  We find such level $4$ subgroups for all 6 possible composite-$(2,3)$ images and proceed by computing the rational points on the composite-$(4,3)$ level modular curves. We repeat this procedure for each tier of our tower ending with level $16$.

For $n=1$, we sometimes find hyperelliptic models. For $n=2,3,4$, we often find models for the composite-$(2^n,3)$ level modular curves as superelliptic curves defined by the affine equation $y^3 = f(x^2)$.

\subsection{Models for Theorem \ref{2.1}}\label{M2.1}
The discriminant condition allows us to construct hyperelliptic models for the composite-$(2,\ell)$ level modular curves in Theorem \ref{2.1}.
Indeed, an elliptic curve $E/\mQ$ with such a discriminant has 2-division field $\mQ(E[2])$ isomorphic to $\mQ(\alpha)$ where $\alpha$ is a root of the defining cubic equation $f(x)$ of $E$, which is equivalent to $j_E$ being of the form $s^2 + 1728$ for some $s\in \mQ$.
For applicable subgroups $G_{n,\ell} \subset \GL_2(\ZZ{\ell})$, except for the level 11 subgroup $\ref{G113}$, the composite-$(2,\ell)$ level modular curve has the form $$ X_{\ref{G223},G_{n,\ell}}(2\cdot \ell)\colon s^2 + 1728 = f(t)/g(t),$$ where $f,g \in \mQ[t].$ Through some simple manipulation, we rewrite our modular curve as  $g(t)^2s^2 = f(t)g(t) - 1728g(t)^2 = h(t)^2w(t)$ for some $h,w \in \mQ[t]$. Then we consider the birational map 
\begin{eqnarray*}
\varphi \colon  X_{\ref{G223},G_{n,\ell}}(2\cdot\ell) & \lrra & X\\
(s,t) &\longmapsto  & (g(t)s/h(t) , t).
\end{eqnarray*}
Hence we have reduced our problem to finding the rational points on the hyperelliptic curve
\begin{equation*}
X \colon y^2 = w(t).
\end{equation*}

\begin{remark}
In the proofs of Theorems \ref{2.2} and \ref{2.1}, we first consider maximal applicable subgroups. If $H,H' \subseteq \GL_2(\ZZ{\ell})$ are both applicable such that $H$ is maximal and $H' \subset H$, then we have a map between the composite level modular curves $ X_{G,H'} \rra X_{G,H}$. Hence, the points on $X_{G,H'}$ must map to points on $X_{G,H}$. In particular, if $X_{G,H}(\mQ)$ is finite, then so is $X_{G,H'}(\mQ)$. 
\end{remark}

\section{Analysis of Rational Points --- Theory}\label{S5}
The composite level curves whose models we computed have genera ranging from zero to seven.

\subsection{Low genus curves}
For the genus 0 curves, we determine whether the curve has a rational point, and if so we compute an explicit isomorphism with $\mP_{\mQ}^1$. For the genus 1 curves, we determine whether the curves have a non-singular rational point, and if so, we compute a model for the resulting elliptic curve and determine its rank and torsion subgroup. This is straightforward: most of the  covering maps have degree 2, so we end up with a model of the form $y^2 = p(t)$, where $p(t)$ is a polynomial, and the desired technique is implemented in \texttt{Magma}. The remaining cases are handled via other techniques. \\ 

For the higher genera, our toolkit to analyze rational points consists of:
\begin{enumerate}
\item local methods,
\item the Chabauty--Coleman method,
\item quotients,
\item \'etale descent,
\item the Mordell--Weil sieve,
\item Prym varieties.
\end{enumerate}
Below, we describe some of the theory behind these techniques and the subsequent sections provide a case by case analysis of the rational points on our composite level modular curves.

\begin{table}[h!]
\begin{tabular}{|c||c|c|c|c|c|c|c|}
\hline
Type& $(2,3)$ & $(4,3)$ & $(8,3)$ & $(16,3)$ & $(2,5)$ & $(2,7)$ & $(2,11)$\\
\hline\hline
\vspace*{-.2em}$\mP^1$ & 6  & 5 & 4 & 4 & 1 & 1 & \\
Elliptic curve with rank 0 & 2  & 14 & 12 & & &  & \\
Elliptic curve with rank $>0$ &   & & 2&  & &  &  \\
Genus 2 &   & & 8&  & 1 & 3 & \\
Genus 3 and hyperelliptic &   & & & 8& & 2 & \\
Genus 3 and non-hyperelliptic &   & & & 5 & &  & \\
Genus 4 and non-hyperelliptic &   & & & 6& &  & \\
Genus 6 and hyperelliptic &   & & & 2 & &  & \\
Genus 7 and non-hyperelliptic &   & & & & &  & 1\\
\hline
\end{tabular}
\caption{Data of isomorphism classes for composite level modular curves}
\end{table}

\begin{remark}[Facts about rational points on $X_{G,H}$]\label{5.1}
Every rational point on a curve $X_{G,H}$ of genus one that has rank zero is a cusp or a CM point. Also, all the rational points curves of higher genera are either cusps or CM points, and hence there are no \textit{sporadic} points.
\end{remark}

\subsection{The Chabauty--Coleman method} \label{Chabauty}
Let $X/\mQ$ be a smooth, projective, and geometrically integral curve. 
In 1941, Chabauty \cite{chabauty1941points} proved the finiteness of $X(\mQ)$ under the condition that the Jacobian $J$ of $C$ has rank $r\coloneqq \rank_{\mZ}J(\mQ)$ less than the genus $g$ of $X$. 
Chabauty's idea was to consider $X(\mQ)$ inside the more tractable space $X(\mQ_p)\cap \overline{J(\mQ)}$ where $\overline{J(\mQ)}$ is the $p$-adic closure of $J(\mQ)$ inside of $J(\mQ_p)$. To deduce finiteness of this intersection, Chabauty constructed locally analytic functions, which are $p$-adic integrals in modern parlance, vanishing on $X(\mQ)$ and deduced his result utilizing the fact that an analytic function cannot take a value infinitely often. 

Using techniques from $p$-adic analysis, namely Newton polygons, Coleman \cite{coleman1985effective} controlled the zeros of these $p$-adic integrals to give an explicit upper bound on the number of $\mQ$-points of a curve over $\mQ$ when the rank $r\leq g-1$ and $p$ is a prime of good reduction. 
The practical output is that if $r\leq g-1$ , then $p$-adic integration produces an explicit 1-variable power series $f \in \mZ_p \llbracket t\rrbracket$ whose set of $\mZ_p$-solutions contains all of the rational points. 
This is all implemented in \texttt{Magma} for genus 2 curves over $\mQ$, and in Section~\ref{subsec:rank1}, we discuss the documentation.

In Section~\ref{Chab}, we perform an explicit Chabauty computation for a non-hyperelliptic genus 3 curves, so we briefly recall results from $p$-adic integration; 
we refer the reader to \cite{mccallum2007method, katz2016diophantine} for further details. 
Let $C_{\mQ_p}$ denote the base change of $C$ to $\mQ_p$ for $p$ a prime of good reduction.
Given a point $P\in X_{\mF_p}(\mF_p)$, the inverse image of $P$ under the surjective reduction map
$$
\begin{tikzcd}
\rho\colon C(\mQ_p)\arrow[two heads]{r} & C_{\mF_p}(\mF_p)
\end{tikzcd}
$$
is isomorphic to a $p$-adic disk $D_P$; this isomorhpism is induced by the uniformizer $t$ at any point $Q\in D_P$.
Since the $p$-adic disk has trivial de Rham cohomology, any $\omega\in H^0(C_{\mQ_p},\Omega^1)$ can be expressed as a power series on $D_P$:
$$\omega |_{D_P} = \sum_{i=0}^{\infty} a_it^i dt \in \mZ_p\fps{t}dt .$$
Now for $Q_1,Q_2\in D_P$, the $p$-adic integral is defined by formal anti-differentiation as
$$\int_{Q_1}^{Q_2}\omega \coloneqq \int_{t(Q_1)}^{t(Q_2)} \sum_{i=0}^{\infty} a_it^i dt = \left.\pwr{\sum_{i=0}^{\infty} \frac{a_i}{i+1} t^{i+1}}\right|_{t(Q_1)}^{t(Q_2)}.$$

To summarize, the Chabauty--Coleman method states that if $r\leq g-1$ and $p$ is a prime of good reduction, then there exists a $g-r$ dimensional space of differentials $\Lambda_C \subset H^0(C_{\mQ_p},\Omega^1)$ such that the $p$-adic integrals $\int \omega$ vanish on $\mQ$-points of $C$. 
Using results on Newton polygons, we can effectively bound these zeros inside each residue disk.

\subsection{\'Etale descent}\label{5.3}
\'Etale descent is a ``going up" style technique, first studied in \cite{coombes1989heterogeneous, wetherell1997bounding} and developed as a full theory in \cite{skorobogatov2001torsors}. It is now a standard technique for resolving the rational points on curves (cf.~\cite{ flynn2001covering, bruin2003chabauty}). 

Let $\pi\colon X \rra Y$ be a degree $n$ \'etale cover defined over a number field $K$ such that $Y$ is the quotient of some free action of a group $G$ on $X$. By Riemann--Hurwitz, the genus of $X$ is $n g(Y) - (n-1)$. Then there exists a finite collection $\pi_1\colon X_1 \rra Y,\dots , \pi_n\colon X_n \rra Y$ of twist of $X\rra Y$ such that
\begin{equation*}
\bigcup_{i=1}^n \, \pi_i(X_i(K)) = Y(K).
\end{equation*}
We shall use this procedure on in the case of \'etale double covers. In this case, $G = \ZZ{2}$, and since the twists are consequently quadratic, we will instead denote the twist of a double cover $X \rra Y$ by $X_d \rra Y$, where $d \in K\unit/(K\unit)^2$. The above discussion gives that, for any point of $Y(K)$, there will exist $d \in \cO_{K,S}\unit/(\cO_{K,S}\unit)^2$ such that $P$ lifts to a point of $X_{d}(K)$ where $S$ is the union of the sets of primes of bad reduction of $X$ and $Y$ and of the primes of $\cO_K$ lying over 2.

\subsection{The Mordell--Weil sieve}\label{MWsieve}
In many situations, we encounter a curve $C$ with only one known, non-singular $\mQ$-point $\infty$, and we wish to prove that $C(\mQ) = \brk{\infty}$. We can define an Abel--Jacobi map based at $\infty$, which allows us to consider the commuting diagram 
$$
\begin{tikzcd}
C(\mQ) \arrow[right hook->]{r}{\iota} \arrow{d}{\beta} & J(\mQ) \arrow{d}{\alpha} \\
\prod_{p\in S} C_{\mF_p}(\mF_p) \arrow[right hook->]{r}{\iota_S} & \prod_{p\in S}J_{\mF_p}(\mF_p)
\end{tikzcd}
$$
where $S$ is the set of primes of good reduction. 

Suppose that there exits some other non-singular point $\infty\neq P \in C(\mQ)$. The idea of the Mordell--Weil sieve is to derive a contradiction from various bits of local information coming from $\iota_S$, using the global constraint that a rational point on the curve maps into $J(\mQ)$. We explain the details in Section \ref{Gen3}, and refer the reader to \cite{bruin2010MWsieve} for a further discussion.

\subsection{Prym varieties}
Let $\pi\colon D\to C$ be an unramified finite morphism of degree $2$ between curves over $K$ and let $\iota\colon D\to D$ be the non-trivial involution of $D/C$. The Riemann--Hurwitz theorem implies that $g(C)>0$ and $g(D) = 2g(C)-1$. The associated \cdef{Prym variety} $\Prym(D/C)$ is the connected component containing 0 of the kernel $\pi_* \colon \Jac_D \rra \Jac_C$, which coincides with the image of $(\id_* - \iota_*)\colon \Jac_D \to \Jac_D$. Moreover, $\Prym(D/C)$ is an abelian subvariety of $\Jac_D$ of dimension $g(C) - 1$ with principal polarization coming from the restriction of the principal polarization on $\Jac_D$. Historically, Prym varieties provided examples of principally polarized abelian varieties, which are not Jacobian varieties.

In our situation, $C$ is a genus 3 non-hyperelliptic curve. 
Bruin \cite{bruin2008arithmetic} finds an explicit description of the associated Prym variety as $\Jac_F$ where $F$ is a genus 2 hyperelliptic curve. In addition to the description of the Prym variety, he gives an explicitly computable map $\varphi \colon D_{\delta} \rra \Jac_{F_{\delta}}$. 
Bruin's map does not require the existence of a rational point on $D_{\delta}$, so we could apply this construction to prove that $D_{\delta}(\mQ)$ is empty even if $D_{\delta}$ does have local points everywhere. In good circumstances, the rank of $\Jac_{F_{\delta}}(\mQ)$ is 0 for all relevant twists, and after finding the torsion subgroup of $\Jac_{F_{\delta}}(\mQ)$ and pulling back to $D_{\delta}(\mQ)$, we can determine the $\mQ$-points of $C$ by computing the image of $D_{\delta}(\mQ)$ under $\pi$. If the rank is positive, then one must proceed in a different manner.

\section{Analysis of Rational Points --- Genus 2}\label{sec:gen2}
There are 12 isomorphism classes of composite level modular curves with genus 2. Among these, 6 have Jacobians with rank 0, 4 with rank 1, and 2 with rank 2. We will use \'etale descent on the rank 2 cases and Chabauty and quotients on the others. In each case, the rank of the Jacobian is computed with \texttt{Magma}'s \texttt{RankBound} instrinsic. In the subsections below, the curve $X$ will denote a hyperelliptic curve of genus 2, and $J_X$ its Jacobian.

\subsection{Rank 0} If $\rank \, \Jac_{X}(\mQ) = 0$, then $\Jac_X(\mQ)$ is torsion. To find all of the rational points on $X$, it suffices to compute the torsion subgroup of $\Jac_X(\mQ)$ and compute the preimages under an Abel--Jacobi map $X \ext \Jac_X$. This is implemented in \texttt{Magma} as the \texttt{Chabauty0(J)} command, where $\texttt{J}$ is $\Jac_X$. 

\subsection{Rank 1}\label{subsec:rank1} If $\rank \, \Jac_X(\mQ) = 1$, then one can attempt Chabauty's method. This is implemented in \texttt{Magma} as the \texttt{Chabauty(ptJ)} command, where \texttt{ptJ} is a $\mQ$-point on $\Jac_X$ which generates $\Jac_X/\Jac_X[\tors]$. The intrinsic combines the Chabauty--Coleman method with the Mordell--Weil sieve to provably find the rational points on $X$. 

\subsection{Rank 2}\label{subsec:rank2}
If $\rank \, \Jac_X(\mQ) = 2$, then Chabauty's method does not apply; instead, we proceed with \'etale descent. In each case, the Jacobian of $X$ has a rational 2-torsion point. Thus, given a model
$$X\colon y^2 = f(x)$$
of $X$, $f$ factors as $f_1f_2$ where both polynomials are of positive, even degree, and $X$ admits \'etale double covers $C_d \rra X$, where the curves $C_d$ is given by 
$$
C_d \colon \left\lbrace\begin{array}{l}
dy_1^2 =  f_1(x) \\
dy_2^2  = f_2(x).
\end{array}\right.
$$
Let $S$ denote the set of bad places as in Section \ref{5.3}. By \'etale descent, every rational point on $X$ lifts to a rational point on $C_d(\mQ)$ for $d$ in the set divisors of primes in $S$, there multiples, and negations. The Jacobian of $C_d$ is isogenous to $\Jac_X \times E_d$, where $E_d$ is the Jacobian of the (possibly pointless) genus one curve $dy_2^2 = f_2(x)$ (where we assume that $\deg f_2 \geq \deg f_1$, so that $\deg f_2 \geq 3$). 

The two curves $X_{H_{40},\ref{G43}}(24)$ and $X_{H_{97},\ref{G43}}(24)$ are isomorphic to the rank 2 hyperelliptic curve
$$H\colon y^2 = 2x^6 + 2 = 2(x^2 + 1)(x^4 - x^2 + 1).$$
This curve admits \'etale covers by the genus 3 curves 
$$
C_d\colon \left\lbrace
\begin{array}{l}
dy_1^2 =  (x^2 + 1) \\
dy_2^2  = 2(x^4 - x^2 + 1)
\end{array}\right.
$$
for $d \in \brk{\pm 1, \pm2 ,\pm 3,\pm 6}$. We find that the genus 1 curves $ dy_2^2  = 2(x^4 - x^2 + 1)$ only have local points everywhere when $d = 2$. We compute that the curve $2y_1^2 =  (x^2 + 1)$ is isomorphic to $\mP_{\mQ}^1$ and the curve $2y_2^2  = 2(x^4 - x^2 + 1)$ is isomorphic to the rank 0 elliptic curve 
$$E\colon y^2 + 2xy = x^3 - 8x^2 + 12x.$$
The diagram
$$
\begin{tikzcd}[row sep = 1em]
{} & C_d(\mQ) \arrow[swap]{dl}{\pr_1} \arrow{dr}{\pr_2} \arrow[swap]{d}{\pi}& {} \\
\mP_{\mQ}^1 \arrow[equal]{rd} & H(\mQ)\arrow{d}& E(\mQ) \arrow{ld}{x} \\
{} & \mP_{\mQ}^1 & {}
\end{tikzcd}
$$
tells us that the points on $C_d(\mQ)$ come from the preimages of the points on $E(\mQ)$. This allows us to determine the rational points on $C_d$ and thus on $H$ and on $X_{H_{40},\ref{G43}}(24)$ and $X_{H_{97},\ref{G43}}(24)$.

\section{Analysis of Rational Points --- Genus 3}\label{sec:gen3}
There are 15 isomorphism classes of genus 3 curves. 
Of these classes, 10 are hyperelliptic. 
The curves $X_{\ref{G223},\ref{G72}}(14)$ and $X_{\ref{G223},\ref{G76}}(14)$ are hyperelliptic and have rank equal to 0, and we handle these curves by using a Mordell--Weil sieve argument. 
The remaining hyperelliptic cases occur when considering composite-$(16,3)$ level modular curves, and we handle these cases using quotients or \'etale descent. 

The other 5 isomorphism classes $$X_{H_{105},\ref{G43}}(48),\,X_{H_{106},\ref{G43}}(48),\, X_{H_{107},\ref{G43}}(48),\,X_{H_{109},\ref{G43}}(48), \aad X_{H_{124},\ref{G43}}(48)$$ are non-hyperelliptic. 
The curve $X_{H_{109},\ref{G43}}(48)$ admits a rank 0 sub-quotient; by 
using Prym varieties, we determine the points on the rank 2 curve $X_{H_{124},\ref{G43}}(48)$; we also provably find the points on the rank 1 curve $X_{H_{106},\ref{G43}}(48)$ through a Chabauty argument.
For the remaining 2 isomorphism classes, we are unable to compute all of the $\mQ$-points, and we discuss our attempts in Section \ref{subsubsec:impish}.

\subsection{Analysis of genus 3 hyperelliptic curves} \label{Gen3}
As mentioned above, we find models for some composite level modular curves as genus 3 hyperelliptic curves. 

\subsubsection{Analysis of $X_{\ref{G223},\ref{G72}}(14)$} \label{Gen3}
The modular curve $X_{\ref{G223},\ref{G72}}(14)$ has a model given by the genus 3 hyperelliptic curve 
\begin{align*}
X_{\ref{G223},\ref{G72}}(14) \colon & y^2 =(x^3 - 4x^2 + 3x + 1)(x^4 - 10x^3 + 27x^2 - 10x - 27).
\end{align*}
For simplicity, we denote the smooth projective compactification of this modular curve by $X$.
\texttt{Magma} computes that $\rank \Jac_X(\mQ) = 0$, so $\Jac_X(\mQ)$ is torsion. We find that there exists a non-singular point $[1:0:0] \in X(\mQ)$, and we claim that this is in fact the only point on $X$. For ease of notation, we shall denote this point as $P_0$. 

From \cite[Exercise~C.4]{hindry2000diophantine}, we have $\#\Jac_{X_{\mF_p}}(\mF_p) = P_1(1)$, where $P_1(T)$ is the numerator of the Weil zeta function of $X_{\mF_p}(\mF_p)$ for some prime $p$. Moreover, by computing this value for a large number of primes and taking greatest common divisor, we find that $\# \Jac_X(\mQ)$ must divide $6.$ Since we have a non-singular point $P_0$ on $X$, we can embed $X$ into $\Jac_X$ via an Abel--Jacobi map
\begin{eqnarray*}
X(\mQ) &\ext & \Jac_X(\mQ) \\
P &\longmapsto & [P-P_0].
\end{eqnarray*}
Our above computation tells us the possible torsion in $\Jac_X(\mQ)$ is of order $1,2,3$ or $6$. Recall that the prime to $p\neq 2$ torsion of $\Jac_X(\mQ)$ injects into $\Jac_{X,{\mF_p}}(\mF_p)$. Let $S = \brk{5,11}$ and consider the Mordell--Weil sieve from Section~\ref{MWsieve}.

Suppose there exists another non-singular point $P\in X(\mQ)$. Since the divisor $[P-P_0]$ is a torsion point of $\Jac_X(\mQ)$, then it must also be torsion over $\mF_p$ for all $p$. Using \texttt{Magma}, we can enumerate $X_{\mF_p}(\mF_p)$ and check individually the orders of their respective images in $\Jac_{X,\mF_p}(\mF_p)$. We compute that the points on $X_{\mF_5}(\mF_5)$ map to points of exact order in $\brk{1,51}$ in $\Jac_{X,\mF_5}(\mF_5)$ and the points on $X_{\mF_{11}}(\mF_{11})$ map to points of exact order in $\brk{1, 8, 20, 40, 60, 120}$ in $\Jac_{X,\mF_{11}}(\mF_{11})$. Since none of these values, except for 1, coincide and the prime to $p$ torsion injects, we have that the possible orders of the divisor $[P-P_0]$ in $\Jac_X(\mQ)$ are either $5$ or $11$. However, our initial computation told us that the possible torsion in $\Jac_X(\mQ)$ must divide 6, and so this absurdity proves that $\brk{P_0} = X(\mQ)$.


\subsubsection{Analysis of $X_{H_{156},\ref{G43}}(48)$} \label{7.2}
The modular curve $X_{H_{156},\ref{G43}}(48)$ has a model as a genus 3 hyperelliptic curve
\begin{equation*}
X_{H_{156},\ref{G43}}(48)\colon y^2 = -x^7 - 8x.
\end{equation*} 
For simplicity, we denote the smooth projective compactification of this modular curve by $X$.
\texttt{Magma} computes that the rank of $X$ is at most 3. The rational points on $X$ lift to twists of the \'etale double cover by the genus 5 curves 
$$
C_d\colon \left\lbrace
\begin{array}{l}
dy_1^2 =  x \\
dy_2^2  = -(x^2+2)(x^4-2x^2+4)
\end{array}\right.
$$
for by $d \in \brk{\pm 1, \pm2 ,\pm 3,\pm 6}$. Each of these curves maps to the genus 2 hyperelliptic curve 
\begin{equation*}
H_d\colon dy^2  = -(x^2+2)(x^4-2x^2+4). 
\end{equation*}
For the above $d$, the Jacobian of $H_d$ has rank 1. Using quotients and Chabauty, we determine that there are 4 CM points on $X_{H_{156},\ref{G43}}(48)$ corresponding to $j=-3375$ and $j=16581375$.

\subsection{Analysis of genus 3 non-hyperelliptic curves}\label{subsec:g3super}
In this subsection, we analyze the rational points on the composite-$(16,3)$ level modular curves $X$ which have affine equation $y^3 = f(x^2)$.

\subsubsection{Analysis of $X_{H_{109},\ref{G43}}(48)$}
The modular curve $X_{H_{109},\ref{G43}}(48)$ is a genus 3, non-hyperelliptic curve with affine equation
$$X_{H_{109},\ref{G43}}(48)\colon y^3  = 4(x^4 - 8x^2 + 8).$$
For simplicity, we denote the smooth projective compactification of this modular curve by $X$. 
The canonical image of $X\subset \mP^2$ is the smooth plane quartic
$$C\colon -4v^4 + u^3w + 32v^2w^2 - 32w^4 = 0.$$
This curve has a two to one map to the elliptic curve
$$E\colon v^2 + 128v = u^3 - 2048,$$
which has rank zero with trivial torsion subgroup. To wit, we conclude that $X_{H_{109},\ref{G43}}(48)$ has no $\mQ$-rational points.

\subsubsection{Analysis of $X_{H_{124},\ref{G43}}(48)$}\label{Prym}
The modular curve $X_{H_{124},\ref{G43}}(48)$ is the genus 3 non-hyperelliptic curve with affine equation
$$X_{H_{124},\ref{G43}}(48)\colon y^3 =  2(x^4 + 4x^2 + 2)^2.$$
For simplicity, we denote the smooth projective compactification of this modular curve by $X$. 
We compute that $X$ maps to an elliptic curve $E$ with Mordell--Weil group $E(\mQ)\iso \mZ \oplus \ZZ{2}$. 
The existence of two torsion in $\Jac_X$ implies that $X$ admits an \'etale double cover.
By \cite{bruin2008arithmetic}, a genus 3 non-hyperelliptic curve over $\mQ$ admits an \'etale double cover if and only if it admits a model of the form 
$$Q_1(u,v,w)Q_3(u,v,w) = Q_2(u,v,w)^2$$
where $Q_1,Q_2,Q_3 \in \mQ[u,v,w]$ are quadratic forms. 
The canonical image of $X\subset \mP^2$ is the smooth plane quartic
$$C\colon u^4 + 4u^2v^2 + 2v^4 - 2vw^3 = 0$$
with determinantal decomposition 
$$
\begin{array}{l}
Q_1(u,v,w) \coloneqq  2vw + 2v^2, \\
Q_2(u,v,w) \coloneqq  u^2 + 2v^2, \\
Q_3(u,v,w) \coloneqq  v^2 - vw + w^2.
\end{array}
$$
From these, we construct a genus 5, unramified double cover $D_{\delta}$ by
$$D_{\delta}\colon \left\lbrace
\begin{array}{l}
Q_1(u,v,w) = \delta r^2 \\
Q_2(u,v,w) = \delta rs \\
Q_3(u,v,w) = \delta s^2.
\end{array}\right.$$
where $\delta \in \brk{\pm 1,\pm 2, \pm 3,\pm 6}$.  Let $\iota \colon [u:v:w:r:s] \mapsto [u:v:w:-r:-s] $ be an involution of $D_{\delta}$. Every point on $C(\mQ)$ lifts to 2 points on $D_{\delta}(\mQ)$ via $\iota$. Thus, in order to determine the rational points on $C$, it suffices to determine the rational points on $D_{\delta}$ for each $\delta$. 

Let $P_0 = [1:0:0:0:1] \in D_{\delta}(\mQ)$. We can embed $D_{\delta}$ in $\Jac_{D_{\delta}}$ via an Abel--Jacobi map
\begin{eqnarray*}
D_{\delta} & \ext & \Jac_{D_{\delta}} \\
P &\longmapsto & [P-P_0].
\end{eqnarray*}
When we compose this map with the projection map $(\id_* - \iota_*)\colon \Jac_{D_{\delta}} \rra \Prym(D_{\delta}/C)$, we obtain the Abel--Prym map 
\begin{eqnarray*}
D_{\delta} & \lrra & \Prym(D_{\delta}/C) \\
P &\longmapsto & [P-\iota(P)] - [P_0 - \iota(P_0)].
\end{eqnarray*}
 
Using the \texttt{Magma} code from \cite{bruin2008arithmetic},  we have the following diagram
$$
\begin{tikzcd}
D_{\delta} \arrow{r}{\varphi} \arrow{d} & \Jac_{F_{\delta}} \\
C & {} & 
\end{tikzcd}
$$
where $F_{\delta}$ is a genus 2 hyperelliptic curve. 
We find that every twist but the trivial one either has no real points or is not locally soluble at $2$ or $3$. When $\delta = 1$, we find that $\Jac_{F_{1}}(\mQ)$ has rank zero and torsion subgroup of size four. 
We compute that the four known points on $D_{\delta}(\mQ)$ map to distinct points in $\Jac_{F_1}(\mQ)$, and hence we deduce that the two known points on $C(\mQ)$ are in fact the only points. 
We conclude by checking that these points are cuspidal and CM corresponding to $j = 1728$.

\subsubsection{Analysis of $X_{H_{106},\ref{G43}}(48)$}\label{Chab}
As above, the modular curve $X_{H_{106},\ref{G43}}(48)$ is a genus 3 non-hyperelliptic curve with affine equation
$$X_{H_{106},\ref{G43}}(48)\colon y^3 = x^4 + 8x^2 + 8.$$
For simplicity, we denote the smooth projective compactification of this modular curve by $X$. 
We first attempt the above methods of quotients and Prym varieties. 
We find a non-trivial map from our curve $X$ to an elliptic curve $E$ with positive rank and $\ZZ{2}$ torsion, and so quotients do not yield a desired result.
As above, the existence of two torsion implies that our curve $X$ admits an \'etale double cover, and 
we compute the determinantal decomposition of $X$ as well as the double cover $D_{\delta}$. 
There exists a twist $\delta$ such that the Prym variety $\Prym(D_{\delta}/C)$ has positive rank, 
and so the above technique does not apply.

Using the \texttt{Magma} instrinsic
\begin{verbatim}
RankBound(x^4 + 8x^2 + 8,3);
\end{verbatim}
we compute that the rank of $X$ is at most $1$, which suggests that we proceed by a Chabauty argument. 
The canonical image of $X\subset \mP^2$ is the smooth plane quartic
$$C\colon u^4 + 8u^2v^2 + 8v^4 + vw^3=0.$$
We compute that $C(\mQ)$ contains two non-singular points $Q_0 \coloneqq [2:0:1]$ and $Q_1\coloneqq [1:0:0]$, and we claim that these are in fact the only points. 
By considering different reduction modulo $p$, we see that the $J_C(\mQ)[\tors] \subset \ZZ{2} \times \ZZ{2}$. 
Recall that for a genus 3 non-hyperelliptic curve $C$, the differences of bitangents of $C_{\overline{\mQ}}$ will generate $J_{C_{\overline{\mQ}}}[2]$.
Furthermore, by determining these differences, we see that there is only a single $2$ torsion point coming from the elliptic curve $E$, and so $J_C(\mQ)\iso \mZ \oplus \ZZ{2}$. 

Using the point $Q_1$, we can define an Abel--Jacobi map $C(\mQ)\ext J_C(\mQ)$ and form the degree zero divisor $D = [Q_0 - Q_1]$. 
Observe that 5 is a prime of good reduction for $C$, and consider the Chabauty setup:
$$
\begin{tikzcd}
C(\mQ) \arrow[right hook->]{r} \arrow[right hook->]{d} & C(\mQ_5) \arrow[right hook->]{d} \\
J_C(\mQ) \arrow[right hook->]{r} & J_C(\mQ_5).
\end{tikzcd}
$$
By considering the reduction mod 5, we see that the class $D$ is not divisible by two or three, and hence $D$ is a generates a finite index subgroup of $J_C(\mQ)$ since prime to $p\neq 2$ torsion injects \cite[Appendix]{katz_Galoisproperties}. 
We wish to find a differential $\omega_J \in H^0(J_{\mQ_5},\Omega^1)$ such that 
$$\int_0^D \omega_J = 0.$$

We see that $6D$ lies in the kernel of reduction modulo $5$ and that there are 6 points in $C_{\mF_5}(\mF_5)$; two of these $P_0 = \overline{Q_0}$ and $P_1= \overline{Q_1}$ lie in the image of the Mordell--Weil group under the Abel--Jacobi induced by $P_1$. 
Since $C$ is non-hyperelliptic, the linear system $|6D + 2[Q_1]|$ is either empty or zero-dimensional, and we verify that $|6D + 2[Q_1]| = D'$ where $D' = \Tr([34:2\sqrt{-86430}:225])$.
If we set $Q=[34: 2\sqrt{-86430}:225]$ and $Q'$ the conjugate, then we see that $Q$ and $Q_1 = [1:0:0]$ lie in the same residue disk $D_{P_1}$ of $C(K)$ for the ramified extension $K = \mQ_5(\sqrt{15})$. Moreover, we have
$$\int_0^{D}\omega_J = \frac{1}{6}\int_0^{6D} \omega_J = \frac{1}{6}\pwr{{\int_0^{[Q-Q_1]}\omega_J \, +\, \int_0^{[Q'-Q_1]}\omega_J}} = \frac{1}{6}\pwr{\int_{Q_1}^{Q}\omega_C \, +\, \int_{Q_1}^{Q'}\omega_C},$$
where we identify $H^0(C_{\mQ_5},\Omega^1)$ with $H^0(J_{\mQ_5},\Omega^1)$ via \cite[Proposition~2.1]{siksek2009chabauty}.

We compute a basis $\brk{\omega,\omega',\omega''}$ for $H^0(C_{\mQ_5},\Omega^1)$ such that 
$$\omega |_{D_{P_1}} \equiv 2t + 2t^3 + 2t^5 + \cdots \pmod 5,$$
in particular the expression is odd in the local coordinate $t = y$. 
Since $t(Q) = -t(Q')$ as $Q$ and $Q'$ are conjugate, we see that 
$$\int_{Q_1}^{Q}\omega \, + \, \int_{Q_1}^{Q'}\omega = 0,$$
and so $\omega\in \Lambda_C$. 
Moreover, the number of zeros for $\int\omega$ inside $D_{P_1}$ bounds the size of $\#(C(\mQ) \cap D_{P_1})$.
Standard Chabauty results (cf.~\cite[Section~2]{siksek2009chabauty}) assert that $\omega$ has one zero in the residue disk $D_{P_1}$, and 
since $5 > 2  + 1 + 1$, results of Stoll \cite[Lemma~6.1 \& Proposition~6.3]{stoll2006independence} imply that the number of zeros of $\int\omega$ within $D_{P_1}$ is bounded by $2$.
Therefore, we deduce that the $p$-adic integral $\int\omega$ only vanishes on the conjugate tuple $\brk{Q,Q'}$ and on the known rational point $Q_1 = [1:0:0]$ inside $D_{P_1}$, and so $C(\mQ)\cap D_{P_1} = Q_1$. 
A similar argument for $Q_0$ shows that $C(\mQ)\cap D_{P_0}= Q_0$.
The residue disks around $P_0$ and $P_1$ are the only relevant ones since these are the only points which lie in the image of the Abel--Jacobi $C_{\mF_5}(\mF_5)\ext J_{X,\mF_5}(\mF_5)$ determined by $P \mapsto [P-P_1]$. 
Furthermore, we conclude that $C(\mQ) = \brk{Q_0,Q_1}$.

\subsubsection{The impish ones}\label{subsubsec:impish}
There are two isomorphism classes 
\begin{align*}
X_{H_{105},\ref{G43}}(48) \colon &  y^3 = x^4 - 8x^2 + 8 ,\\
X_{H_{107},\ref{G43}}(48)\colon &  y^3 = 8x^4 - 8x^2 + 1
\end{align*}
of non-hyperelliptic genus 3 curves whose rational points we could not provably determine. We record our attempts here, discuss why the above methods do not work, and suggest further techniques for analysis. For the remainder of this section, let $X$ denote the smooth projective compactification of one of these modular curves.

First, these two isomorphism classes have Jacobians of rank at most 3, which strongly suggests that Chabauty's method is not possible. 
We next attempt an argument using Prym varieties. 
For each isomorphism class, we find the determinantal decomposition and form the \'etale double cover $D_{\delta}$. 
To our chagrin, each $X$ has a twist $F_{\delta}$ with positive rank. 
The curve $X_{H_{107},\ref{G43}}(48)$ has a twist $F_{\delta}$ with rank 1, and while Chabauty on the Prym is possible, the implement is difficult; see \cite[Section~8]{bruin2008arithmetic} for a ``by hand" example. 
The other curve $X_{H_{105},\ref{G43}}(48)$ has a twist with rank 2 meaning we cannot attempt Chabauty's method on the Prym. 
These curves could admit other \'etale double covers coming from non-trivial $2$-torsion in $J_X(\mQ)$. However, we determine that $J_X(\mQ)[\tors] \iso \ZZ{2}\times \ZZ{4}$ through local considerations, and so each curve $X$ only admits one such double cover.

As above, we find that these curves map to a positive rank elliptic curve, and so $J_X \sim E \times A$ where $A$ is some abelian surface over $\mQ$. 
We first attempt to decompose $A$ into sub-factors and hope that we find a rank zero piece.
We determine that $A$ is simple over $\mQ$ using Honda--Tate theory and computing that the characteristic polynomial of Frobenius is irreducible for some prime $p$.
Similar computations strongly suggest that $A$ splits over a quadratic extension $K/\mQ$ into the the two-fold product of an elliptic curve $E_2$ with good reduction outside of 2 and 3.  
In order to find this elliptic curve, we need to enumerate the non-isomorphic elliptic curves over $K$ with such reduction.

Thankfully, a theorem of Shafarevich \cite[Theorem~6.1]{silvermanAEC} states that there is a finite list of such elliptic curves over any number field $K/\mQ$. Cremona and Lingham \cite{cremona2007finding} give an explicit algorithm for finding such curves, which involves computing the integral points on a particular set of elliptic curves. This procedure has been implemented in \texttt{Magma} as the command 
\begin{verbatim}
EllipticCurveWithGoodReductionSearch(2*3*O,500);
\end{verbatim}
(see \cite{MR1484478} for the documentation). To our chagrin, we do not find our desired elliptic curves over any quadratic extension ramified at $2$ and/or $3$. If one did find a quadratic extension $K$ and the elliptic curve $E_2/K$ as above, then one could proceed with elliptic Chabauty, a technique pioneered by Bruin \cite{bruin2003chabauty}. However, this technique is not fully implemented in \texttt{Magma} since one needs to construct a map from the curve $X_{K}$ to $E_2$, which may or may not come from a quotient mapping. 
To conclude, we check that the known points are CM and/or cuspidal and conjecture that there are no other $\mQ$-points on these curves.


\section{Analysis of Rational Points --- Higher genus}\label{sec:HighGen}
In our computations, we find models for our composite level modular curve of genus greater than 3. There are 3 genus 6 hyperellipitc curves (2 isomorphism classes $X_{H_{171},\ref{G43}}(48)$ and $X_{H_{172},\ref{G43}}(48)$). These 2 curves have rank equal to 0, and we handle them by finding explicit generators for the torsion subgroup of the Jacobian. 
We encounter 1 genus 7 curve coming from the anomalous genus 1 modular curve $X_{\ref{G223}}(11)$ with infinitely many points. We also come across 8 genus 4 non-hyperelliptic curves whose construction resembles that of the genus 7 curve. In some cases, we can easily find the rational points on these curves using quotients. However, there are 2 isomorphism classes of such curves whose $\mQ$-points we cannot provably determine. Finally, there are 3 curves with unknown, large genera that come from the three outstanding cases mentioned in List \ref{list13}. 

\subsection{Analysis of genus 4 non-hyperelliptic curves}
There are 8 non-hyperelliptic curves of genus 4 occurring as composite-$(16,3)$ level modular curves $X_{H_{n},\ref{G43}}(48)$ where $n = 149,150,151,153,$ $160,161,165,166$. For $n=149,151,160,161$, the modular curve $X_{H_n}$ is isomorphic to a rank 0 elliptic curve; hence by computing pre-images, we easily find the points on our composite level modular curve. We cannot determine the $\mQ$-points on the following 2 isomorphism classes (represented via their canonical image in $\mP^3$)
$$
\begin{array}{ll}
X_{H_{150},\ref{G43}}\colon\left\lbrace \begin{array}{ll}
AC + 3BC - D^2 \\
A^2B - 2AB^2 - 7B^3 - C^3,
\end{array}\right. 
& X_{H_{153},\ref{G43}}\colon\left\lbrace \begin{array}{ll}
AC - BC - D^2 \\
A^2B + 2AB^2 - B^3 - 65536C^3,
\end{array}\right. 
\end{array}
$$
but we discuss our attempts below.


Let $X\coloneqq X_{H_n,\ref{G43}}(48)$ be one of the 2 remaining isomorphism classes defined above.
By construction, the curve $X$ is a cover of the rank 1 elliptic curve $E_1 \coloneqq X_{H_n}$. We also find that $X$ covers another non-isogenous elliptic curve $E_2$ of rank 1. Computations of local zeta functions at $p=7$ assert that
$$\Jac_{X} \sim E_1 \times E_2 \times A$$
where $A$ is a simple abelian surface. Similar computations of local zeta functions at $p^2$ suggest that $A$ is not geometrically simple and splits over some quadratic extension of $\mQ$.

We encounter the similar issues with these curves as we did in Section \ref{subsubsec:impish}. 
The best possible approach is elliptic Chabauty, but we were unsuccessful in finding elliptic curves $E',\, E''$ defined over a quadratic number field $K$ such that $A_K\sim E' \times E''$. 
As before, the hardest part of the implementation is finding the morphism from $X_K$ to $E'$ or $E''$ defined over $K$.  
Another approach is to construct an \'etale double cover of these curves.
To proceed, one first shows that part of the 2-torsion in $J_X(\mQ)$ comes from an elliptic factor $E$.
Thus, one can form the normalization of the fibered diagram $X\times_{E_1} E_3$ with $\varphi^{\vee}\colon E_3\to E_1$ the dual isogeny to $\varphi\colon E_1 \to E_3$ with kernel the known 2-torsion point of $E_1$. 
This construction produces our double cover $Z\to X$ with non-optimal equations. 
Furthermore, working on the double cover does not ameliorate the original issue. 
As above, we check that the known rational points correspond to CM and/or cuspidal points.

\subsection{Analysis of genus 6 hyperelliptic curves}\label{Gen6}
There are two isomorphism classes of genus 6 hyperelliptic curves. The hyperelliptic curves
\begin{align*}
X_{H_{171},\ref{G43}}(48) \colon & y^2 = -x^{13} + 64x, \\
X_{H_{172},\ref{G43}}(48) \colon & y^2 = -x^{13} - 64x 
\end{align*}
are representatives for these classes.
\texttt{Magma} computes that the rank of each of these curves is 0, and so we proceed by computing the torsion subgroup of the Jacobian for each respective curve. 
By evaluating Weil zeta functions and comparing invariant factor decompositions of $J_{X,\mF_p}$, we conclude that $$J_X(\mQ) \iso (\ZZ{2})^6.$$ 
Since the two torsion of a hyperelliptic Jacobian is determine by the roots of the defining equation for the curve, we can find generators for each part of $J_X(\mQ)$ and conclude that these curves only possess a point at infinity and another corresponding to $[0 : 0 : 1]$. 

\subsection{Analysis of $X_{\ref{G223},\ref{G113}}(22)$}\label{Gen7}
There is only one modular curve from \cite{zywinapossible} of genus 1 with rank 1, namely $X_{G_3}(11)$, which is isomorphic to the elliptic curve $\sE \colon y^2 + y = x^3 - x^2 - 7x + 10$. In the appendix (cf.~Section~\ref{list11}), we recall the morhpism $J(x,y)$ corresponding to the map from $\sE \rra \mA_{\mQ}^1 \cup \brk{\infty}$ and that $\sE(\mQ) \iso \langle (4,5) \rangle$.  The composite-$(2,11)$ level modular curve 
\begin{equation*}\label{L22}
X_{\ref{G223},\ref{G113}}(22)\colon 
\left\lbrace\begin{array}{l}
y^2 + y = x^3 - x^2 - 7x + 10 \\
s^2 +12^3 = J(x,y)
\end{array} \right.
\end{equation*}
is a genus 7 non-hyperelliptic curve in $\mA^3_{\mQ}.$ For simplicity, we denote the smooth compactification of this curve by $X$. By pulling back points from $\sE(\mQ)$, we find a cuspidal point and the CM point $[x:y:s:z] = [2:0:0:1]$ on $X$. Unfortunately, we are unable to provably compute the rational points on the curve $X$. Below, we discuss the attempted techniques and facts about said curve. 

We know that 
$$\Jac_X \backsim \sE \times A$$
where $\sE$ is the elliptic curve defined above and $A$ is some $6$-dimensional abelian variety. Since $\rank \sE = 1$, we want $A$ to decompose in some way; ideally, we would want $A$ to have rank 0 some elliptic factor. Empirical evidence suggests that $A$ is not simple over $\mQ$ and that $A$ is isogenous to $A_1\times A_2\times A_3$ where $A_i$ are abelian surfaces. However, we are not able to determine the genus 2 curves $C_i$ whose Jacobians $J_i$ are isomorphic to $A_i$.

\begin{remark}
Jeremy Rouse has written \texttt{Magma} code which "guesses" how the Jacobian of a modular curve $X$ decomposes by comparing points counts of $X$ with traces of $a_f(p)$, where $f$ is a newform of level $p$, and he ran this code on the composite-$(2,11)$ level modular curve $X_{\ref{G223},\ref{G113}}(22)$. His results suport that $A$ decomposes as above, but also that each $A_i$ has analytic rank 0! Unfortunately, the genus 2 curves whose Jacobians are isomorphic to $A_i$ are not in the LMFDB database of genus 2 curves \cite{lmfdb}, but this does give evidence that there are no non-obvious points on $X_{\ref{G223},\ref{G113}}(22)$. 
\end{remark}


Following another suggestion of Jeremy Rouse, the geometry of the curve suggests that we search for sub-curves of $X$. If there exists a curve $C$ such that $X\to C$, then there \textit{could} exist some applicable subgroup $H$ such that  $\ref{G223} \times \ref{G113} \subseteq H \subset \GL_2(\ZZ{22})$, which witnesses $C$ as the modular curve associated to $H$. We compute the list of such subgroups and the genera of the associated modular curves to deduce that the only modular quotient is a $\mP^1_{\mQ}$ comes from the unique maximal subgroup $H'$ of $\GL_2(\ZZ{22})$ containing $\ref{G223} \times \ref{G113} $. 
The quotient of $X$ by its automorphism group $\ZZ{2}$ produces the know elliptic sub-curve $E$. Although these techniques did not produce a sub-curve, they do not entirely rule out the possibility of a map from $X$ to a curve of lower genus. We conjecture the following.

\begin{conjecture}
There does not exist a non-CM elliptic curve $E$ over $\mQ$ with square discriminant such that $(\rho_{E,2} \times \rho_{E,11})(G_{\mQ})$ is simultaneously non-surjective. 
\end{conjecture}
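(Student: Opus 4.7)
The curve $X \coloneqq X_{\ref{G223},\ref{G113}}(22)$ is a non-hyperelliptic genus 7 modular curve whose Jacobian decomposes up to isogeny as $\Jac_X \sim \sE \times A$, with $\rank \sE(\mQ) = 1$ and $A$ an apparently simple 6-dimensional factor of positive rank (over $\mQ$ and over any quadratic extension probed numerically). The two known $\mQ$-points are a cusp and the CM point $[2:0:0:1]$, so the conjecture asks one to prove that $X(\mQ)$ contains nothing else. Because there is neither a rank drop nor a useful rank 0 sub-quotient, classical Chabauty (Section \ref{Chabauty}), the Prym-variety technique (Section \ref{Prym}), and the elliptic Chabauty strategy of Section \ref{subsubsec:impish} are all unavailable. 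The unique proper modular quotient produced by the subgroup lattice is $\mP^1_{\mQ}$, so no sub-modular curve of intermediate genus is available either.

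The most realistic route is quadratic Chabauty in the Balakrishnan--Dogra framework, combined with a Mordell--Weil sieve. The plan is: first, compute $\rank \Jac_X(\mQ)$ exactly by performing $2$-descent on each Hecke-isotypic factor produced by the action of the Hecke algebra $\mathbf{T}$ for primes $p \nmid 22$ on $\Jac_X$; second, exploit the modular structure of $X$ by using the Hecke correspondences $T_p$ to produce classes in $\mathrm{NS}(\Jac_X)\otimes \mQ$ that are independent of the principal polarization, from which one builds iterated Coleman integrals and global $p$-adic heights on $X(\mQ_p)$ at a prime of good reduction such as $p = 13$; third, assemble these into a locally analytic functional whose zero set inside $X(\mQ_p)$ is a finite, explicitly computable set containing $X(\mQ)$; fourth, run the Mordell--Weil sieve of Section \ref{MWsieve} across several primes $p \in \{5,7,13,\dots\}$ to cut the candidate set down to the cusp and the CM point. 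As a fallback, the degree 2 map $X \to \sE$ together with $\sE(\mQ) = \langle (4,5)\rangle$ gives a parametrization of the admissible fibers, onto which one could attempt a twisted covering collection in the style of Section \ref{5.3}.

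The principal obstacle is twofold. On the theoretical side, quadratic Chabauty requires $\rank \Jac_X(\mQ) \leq g + \rho(\Jac_X) - 1$, and if the N\'eron--Severi rank of $\Jac_X$ is too small then one is forced into higher non-abelian Chabauty, whose implementation at genus $7$ is currently beyond reach. On the computational side, the $2$-descent needed to pin down $\rank \Jac_X(\mQ)$ is obstructed by the fact that $A$ is simple as a Galois representation, with no rational $2$-torsion or obvious isogeny to a Jacobian to serve as a foothold; already deciding whether $\rank A(\mQ) \in \{1,2,3,4,5,6\}$ is nontrivial. It is precisely these two barriers — the absence of any factor amenable to Chabauty and the inability to effectively compute on the 6-dimensional piece $A$ — that force the statement to be left as a conjecture, and a definitive proof will likely require either a genuinely new structural input about this particular modular curve or a breakthrough in the explicit computation of Mordell--Weil groups for high-dimensional simple factors of modular Jacobians.
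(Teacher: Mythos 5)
The statement you were asked about is a conjecture, and the paper offers no proof of it: Section~\ref{Gen7} records only the author's failed attempts and then states the conjecture. Your write-up correctly recognizes this, and your inventory of the known facts matches the paper's --- $X_{\ref{G223},\ref{G113}}(22)$ is a genus~7 non-hyperelliptic curve, $\Jac_X \sim \sE \times A$ with $\rank \sE(\mQ) = 1$ and $A$ a $6$-dimensional factor that empirically does not split over $\mQ$ or over quadratic fields, the only modular quotient from the subgroup lattice is $\mP^1_{\mQ}$, the $\ZZ{2}$-quotient recovers only $\sE$, and the known points are a cusp and the CM point $[2:0:0:1]$. So there is no gap to report in the usual sense: neither you nor the paper proves the statement, and your explanation of \emph{why} the standard toolkit (Chabauty--Coleman, Pryms, elliptic Chabauty, quotients) fails is the same explanation the paper gives.

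Where you go beyond the paper is the quadratic Chabauty proposal. That is a reasonable direction the paper does not mention, but be careful about two points. First, you assert that $A$ has positive rank; the paper does not establish this --- the obstruction it records is that $A$ appears simple, so its rank cannot currently be computed at all, which is a different (and in some ways worse) state of affairs. Second, the Balakrishnan--Dogra condition is $r < g + \rho - 1$ (strict), and for a Jacobian with a simple $6$-dimensional factor one has little control over either $r$ or the N\'eron--Severi rank $\rho$, so even the \emph{hypothesis} of quadratic Chabauty cannot presently be verified here; your own closing paragraph acknowledges this honestly. In short: your account is faithful to the paper, your added strategy is plausible but unexecuted, and the statement remains, as in the paper, a conjecture.
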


\subsection{The cursed ones}\label{subsec:cursed}
Up to conjugacy, there are 4 maximal subgroups of $\GL_2(\ZZ{13})$ that have surjective determinant, namely $\ref{G136},N_{\spl}(13),N_{\nsp}(13)$ and $\ref{G137}$. Zywina handles the cases concerning the subgroups of $\ref{G136}$, and the other three subgroups correspond to the outstanding cases. 

Baran \cite{baran2014exceptional} showed that the modular curves $X_{N_{\spl}}(13)$ and $X_{N_{\nsp}(13)}$ are both isomorphic to the genus 3 curve $C$ defined in $\mP_{\mQ}^2$ with equation
$$(y-z)x^3 + (2y^2 + zy)x^2 + (-y^3 + zy^2 -2z^2y + z^3)x + (2z^2y^2 - 3z^3y) = 0.$$
Baran also gives the morphism from the above model to the $j$-line. The 7 known rational points on $C$ all correspond to cusps and CM points on $X_{N_{\spl}}(13)$ and $X_{N_{\nsp}}(13)$. 
Recently, Balakrishnan, Dogra, M\"uller, Tuitman, and Vonk \cite{balakrishnanetal_Splitcartan13} proved that $C$ has no other rational points, using explicit Chabauty--Kim methods. Their result is equivalent to saying that there does not exist a non-CM elliptic curve over $\mQ$ with $\rho_{E,13}(G_{\mQ})$ conjugate to a subgroup of $N_{\spl}(13)$ and $N_{\nsp}(13)$. 

Banwait and Cremona \cite{banwait2014tetrahedral} have shown that $X_{\ref{G137}}(13)$ is isomorphic to the genus 3 curve $C'$ defined in $\mP_{\mQ}^2$ with equation 
$$
\begin{small}
\begin{array}{c}
4x^3y - 3x^2y^2 + 3xy^3 - x^3z + 16x^2yz - 11xy^2z + 5y^3z + 3x^2z^2 + 9xyz^2 + y^2z^2 + xz^3 + 2yz^3 = 0.\end{array}\end{small}$$
The authors also give the morphism from the modular curve to the $j$-line. The 4 known rational points on $C'$ correspond to a CM points and three non-CM points. Conjecturally, $C'$ has no other rational points, which is equivalent to saying that \cite[Theorem~1.8(iv)]{zywinapossible} gives necessary and sufficient condition on when $\rho_{E,13}(G_{\mQ})$.

We check that: the points on $C$ do not pull back to points $X_{\ref{G223},N_{\spl}}(26)$, the point $[0:0:1]$ on C pulls back to the CM point corresponding to $j=0$ on $X_{\ref{G223},N_{\nsp}}(26)$, and the known points on $C'$ do not pullback to points on $X_{\ref{G223},\ref{G137}}(26)$. Following the above conjectures, we formulate our own concerning the composite-$(2,13)$ image of Galois.
\begin{conjecture}
There does not exist a non-CM elliptic curve $E$ over $\mQ$ with square discriminant such that $(\rho_{E,2} \times \rho_{E,13})(G_{\mQ})$ is simultaneously non-surjective. 
\end{conjecture}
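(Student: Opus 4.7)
The plan is to translate the conjecture, via Theorem~\ref{2.1}(4), into the statement that the three composite-$(2,13)$ level modular curves $X_{\ref{G223},\ref{G137}}(26)$, $X_{\ref{G223},N_{\spl}(13)}(26)$, and $X_{\ref{G223},N_{\nsp}(13)}(26)$ possess only CM and cuspidal rational points. This reduces an a priori Galois-image statement to a finite list of Diophantine problems on three explicit modular curves.

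First, I would construct models for each composite curve following the recipe of Section~\ref{M2.1}: take the known $j$-map $J(x,y)$ on $X_H(13)$ (from Baran \cite{baran2014exceptional} for $H = N_{\spl}(13), N_{\nsp}(13)$, and from Banwait--Cremona \cite{banwait2014tetrahedral} for $H = \ref{G137}$), impose the square discriminant condition $s^2 + 1728 = J(x,y)$, and normalize the resulting fibered product. Since each $X_H(13)$ is a genus $3$ plane quartic $C$ or $C'$ and the cover $X_{\ref{G223},H}(26) \to X_H(13)$ has degree $2$, ramified at the preimages of $j=1728$, Riemann--Hurwitz puts the composite curves at genus at least $5$. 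I would then locate a Jacobian decomposition $\Jac(X_{\ref{G223},H}) \sim \Jac(X_H) \times P$, where $P$ is essentially a Prym variety of small dimension, and attempt to compute $\rank P(\mQ)$ by two-descent.

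With such a decomposition in hand, the toolkit from Section~\ref{S5} should apply. If $\rank P(\mQ) < \dim P$, a Chabauty--Coleman computation on $P$ bounds the rational points in each residue disk of $X_{\ref{G223},H}(26)$, in the same spirit as the analysis of $X_{H_{106},\ref{G43}}(48)$ in Section~\ref{Chab}. If instead $P$ splits further over a small quadratic extension $K/\mQ$ as a product of elliptic curves with $\rank \leq 1$ over $K$, then elliptic Chabauty via the composed map $X_{\ref{G223},H}\to P \to E'$ becomes a viable attack. In either favorable scenario, a final Mordell--Weil sieve over primes of good reduction away from $\{2,3,13\}$ should rule out any rational point outside the list of known CM/cuspidal ones; recall that only the $j=0$ point survives on $X_{\ref{G223},N_{\nsp}}(26)$ and none on the other two composites.

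The main obstacle is that the base curves $X_{N_{\spl}}(13)$, $X_{N_{\nsp}}(13)$, and $X_{\ref{G137}}(13)$ are themselves the subject of the open conjectures of Baran and Banwait--Cremona; empirically $\Jac(X_H)(\mQ)$ has positive Mordell--Weil rank and the Prym factor $P$ plausibly inherits the same difficulty, so neither Chabauty nor a straightforward quotient argument is guaranteed to apply. A proof of the conjecture would therefore likely either require genuinely new progress on the underlying genus $3$ curves $C$ and $C'$, or an indirect argument exploiting the square discriminant hypothesis, for example by showing that any hypothetical extra point on $X_H(13)$ has $j$-invariant for which the fiber $s^2 + 1728 = j$ over $X_{\ref{G223}}(2) = \mP^1_{\mQ}$ is not locally soluble at $2$, $3$, or $13$. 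Absent such input, the realistic deliverable is strong numerical evidence for the conjecture via a large Mordell--Weil sieve on each of the three composite curves.
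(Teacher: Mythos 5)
This statement is formulated in the paper as a \emph{conjecture}, and the paper offers no proof of it: its entire supporting evidence consists of (i) the reduction, via Theorem~\ref{2.1}(4), to determining the rational points on the three composite-$(2,13)$ curves lying over $X_{N_{\spl}}(13)$, $X_{N_{\nsp}}(13)$, and the Banwait--Cremona curve for $G_7\subset\GL_2(\mF_{13})$, and (ii) a finite computation checking that the \emph{known} rational points on the genus~$3$ base curves $C$ and $C'$ either fail to pull back to the composite curves or pull back only to the CM point with $j=0$. The truth of the conjecture is then deferred to the open conjectures of Baran and of Banwait--Cremona that $C$ and $C'$ have no further rational points. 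Your proposal identifies exactly the same reduction, reproduces the same pullback evidence (no surviving points over $N_{\spl}(13)$ and $G_7$, only the $j=0$ point over $N_{\nsp}(13)$), and correctly diagnoses the same obstruction, so in substance you and the paper are in agreement that no proof is currently available.

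Where you go beyond the paper is in sketching a direct Diophantine attack on the composite curves themselves --- a Prym-type decomposition of their Jacobians relative to the degree-$2$ cover of $C$ or $C'$, followed by Chabauty--Coleman, elliptic Chabauty, or a Mordell--Weil sieve. The paper does not attempt this (it merely notes these are ``curves with unknown, large genera''), and your genus bound of at least $5$ via Riemann--Hurwitz is correct. This route is in principle more powerful, since positive rank on $\Jac_C(\mQ)$ does not preclude a rank-deficient Prym factor on the cover; but as you yourself observe, there is no evidence that the Prym factor is any better behaved, and you rightly stop short of claiming a proof. The one thing to be careful about is presentation: as written, the first two paragraphs of your proposal read as if the argument will close, and only the final paragraph concedes that it cannot. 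Since the statement is a conjecture, the honest conclusion --- which you do reach --- is that any unconditional proof requires new input on $C$ and $C'$ or a local-solubility argument exploiting the square-discriminant fiber, neither of which is currently available.
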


\section{Applications ---  Entanglement Fields}\label{AppEnt}
In this final section, we discuss applications of our results to the study of entanglement fields. 
An elliptic curve $E$ over $K$ has \cdef{$(m_1,m_2)$-entanglement fields} if $K(E[m_1]) \cap K(E[m_2]) \neq K$ for some positive integers $m_1,\, m_2$. 

In this scenario, ``most" elliptic curves over $\mQ$ have \textit{quadratic} $(2,n)$-entanglement fields for some $n\in \mZ_{>0}$. Indeed, elliptic curves with square discriminant form a thin set in the sense of Serre, so ``most" elliptic curves have non-square discriminant.
For these curves, the $2$-division field $\mQ(E[2])$ will contain $\mQ(\sqrt{\Delta_E})$. By Kronecker--Weber, there exists some $n$ such that $\mQ(\sqrt{\Delta_E})$ is contained in $\mQ(\zeta_n)$, and the Weil paring implies that $\mQ(\zeta_n) \subset \mQ(E[n])$. Therefore, these curves satisfy $\mQ(E[2]) \cap \mQ(E[n]) \supseteq \mQ(\sqrt{\Delta_E})$, and so ``most" elliptic curves have quadratic entanglement fields.

In light of this fact, we restrict our consideration to non-CM elliptic curves over $\mQ$ with entanglement fields $\mQ(E[\ell_1^{m_1}]) \, \cap \, \mQ(E[\ell_2^{m_2}]) \neq \mQ$ for distinct primes $\ell_1$, $\ell_2$ and positive integers $m_1,\, m_2$.
Note that this condition corresponds to the phenomena of the $(\ell_1^{m_1},\ell_2^{m_2})$-composite level image of Galois being contained in a proper subgroup of $\rho_{E,\ell_1^{m_1}}(G_{\mQ}) \times \rho_{E,\ell_2^{m_2}}(G_{\mQ})$.

\subsection{Statement of results}
Using Theorem \ref{2.1}, we prove existence results for $(2,5)$ and $(2,7)$ entanglement fields of degree three when $E$ has square discriminant.
We also exhibit an infinite family of elliptic curves over $\mQ$ with $(2,p^n)$-entanglement fields of degree three where $3\mid p-1$. 
Finally, we complete the classification of non-abelian $(2,3)$-entanglement fields first studied by Brau and Jones \cite{braujones2014elliptic}. For the remainder of this section, we ignore elliptic curves with rational full 2-torsion since these curves cannot have $(2,n)$-entanglement fields by definition.

An important tool in our study of entanglements is Goursat's topological lemma (see \cite[Lemma~5.2.1]{ribet1976galois} for proof). 
\begin{lemma}[Goursat's lemma]\label{Goursat}
Let $G_0$ and $G_1$ be groups and $G\subseteq G_0 \times G_1$ a subgroup satisfying 
$$\pi_i(G) = G_i \quad (i \in \brk{0,1}),$$
where $\pi_i$ denotes the canonical projection onto the $i\tth$-factor. Then there exists a normal group $Q$ and surjective homomorphisms $\psi_0 \colon G_0 \rra Q$, $\psi_1 \colon G_1\rra Q $\, for which $$G = \brk{(g_0,g_1) \in G_0 \times G_1 : \psi_0(g_0) = \psi_1(g_1)}.$$
\end{lemma}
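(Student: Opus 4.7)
The plan is to extract $Q$, together with $\psi_0$ and $\psi_1$, from the internal structure of $G$. First I would introduce the ``axis slices''
\[
N_0 \coloneqq \brk{g_0 \in G_0 : (g_0, e_1) \in G} \subseteq G_0 \qquad \text{and} \qquad N_1 \coloneqq \brk{g_1 \in G_1 : (e_0, g_1) \in G} \subseteq G_1,
\]
which are evidently subgroups. The whole proof will revolve around showing that $N_0$ and $N_1$ are normal, that $Q \coloneqq G/(N_0 \times N_1)$ is the sought quotient, and that the projections induce compatible isomorphisms $Q \cong G_0/N_0 \cong G_1/N_1$.

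Second, using the surjectivity of the projections, I would verify that $N_0 \trianglelefteq G_0$ (and symmetrically $N_1 \trianglelefteq G_1$). Given $g_0 \in G_0$, the hypothesis $\pi_0(G) = G_0$ supplies some $(g_0, g_1) \in G$, and then for $n_0 \in N_0$ the computation
\[
(g_0, g_1)(n_0, e_1)(g_0, g_1)^{-1} = (g_0 n_0 g_0^{-1}, e_1) \in G
\]
shows $g_0 n_0 g_0^{-1} \in N_0$. The argument for $N_1$ is symmetric.

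Third, I would build $Q$. The subgroups $N_0 \times \brk{e_1}$ and $\brk{e_0} \times N_1$ both lie in $G$ and commute elementwise, so $N_0 \times N_1 \subseteq G$ and is normal in $G$. Set $Q \coloneqq G/(N_0 \times N_1)$. The composition
\[
G \ext G_0 \times G_1 \twoheadrightarrow G_0 \twoheadrightarrow G_0/N_0
\]
is surjective by the hypothesis on $\pi_0$, with kernel exactly $N_0 \times N_1$: if $(g_0, g_1) \in G$ with $g_0 \in N_0$, then $(g_0, e_1) \in G$ forces $(e_0, g_1) = (g_0, e_1)^{-1}(g_0, g_1) \in G$, so $g_1 \in N_1$. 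This yields a canonical isomorphism $Q \xrightarrow{\sim} G_0/N_0$, and symmetrically $Q \xrightarrow{\sim} G_1/N_1$.

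Finally, I would define $\psi_i \colon G_i \twoheadrightarrow G_i/N_i \cong Q$ to be the natural quotient followed by the identification above. The containment $G \subseteq \brk{(g_0, g_1) : \psi_0(g_0) = \psi_1(g_1)}$ is immediate from the construction. For the reverse inclusion, if $\psi_0(g_0) = \psi_1(g_1)$, I would choose any $(g_0', g_1') \in G$ mapping to this common class, so that $g_0 g_0'^{-1} \in N_0$ and $g_1 g_1'^{-1} \in N_1$; then
\[
(g_0, g_1) = (g_0 g_0'^{-1},\, g_1 g_1'^{-1}) \cdot (g_0', g_1') \in (N_0 \times N_1) \cdot G = G.
\]
The only real obstacle is the bookkeeping that lets one identify $G/(N_0 \times N_1)$ with both one-sided quotients $G_0/N_0$ and $G_1/N_1$ \emph{compatibly}; once the kernel of the composite $G \twoheadrightarrow G_0/N_0$ is pinned down as $N_0 \times N_1$, the remainder is a clean chase through the definitions.
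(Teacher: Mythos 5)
Your proof is correct and complete: the definition of the axis slices $N_0, N_1$, the verification that they are normal via surjectivity of the projections, the identification of $\ker\bigl(G \twoheadrightarrow G_0/N_0\bigr)$ with $N_0 \times N_1$, and the final graph description of $G$ are all carried out accurately. The paper itself gives no proof of this lemma (it defers to Ribet's Lemma~5.2.1), and your argument is precisely the standard one found there, so nothing further is needed.
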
 
The idea is to use our results concerning composite level modular curves to find possibilities for entanglement. Then we apply Goursat's lemma to sift out the cases where entanglement cannot occur from a group theoretic viewpoint. From here, we compute division fields using \texttt{Magma} and check for entanglements. To demonstrate the technique, we first present a result proving the lack of entanglement fields for a family of elliptic curves over $\mQ$.

\begin{lemma}\label{8.2}
Let $E$ be a non-CM elliptic curve over $\mQ$ with square discriminant. Then $\mQ(E[2]) \, \cap \, \mQ(E[5]) = \mQ$.
\end{lemma}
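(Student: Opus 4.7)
The plan is to rephrase the entanglement condition group-theoretically and then combine Theorem~\ref{2.1}(\ref{2.1.1}) with Goursat's lemma. First I would observe that $\mQ(E[2])\cap\mQ(E[5])=\mQ$ is equivalent to the composite image $(\rho_{E,2}\times\rho_{E,5})(G_{\mQ})$ filling the full direct product $\rho_{E,2}(G_{\mQ})\times\rho_{E,5}(G_{\mQ})$ inside $\GL_2(\mF_2)\times\GL_2(\mF_5)$. The square-discriminant hypothesis, together with the standing convention that curves with rational full $2$-torsion are excluded, forces $\rho_{E,2}(G_{\mQ})$ to be conjugate to $\ref{G223}$, which is cyclic of order $3$ (realising the $A_3$ subgroup of $\GL_2(\mF_2)\cong S_3$).

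I would then split on whether $\rho_{E,5}$ is surjective. If $\rho_{E,5}$ is not surjective, Theorem~\ref{2.1}(\ref{2.1.1}) identifies the composite image as exactly $\ref{G223}\times\ref{G59}$, i.e.\ the full direct product, and the conclusion is immediate. If $\rho_{E,5}$ is surjective, I would suppose the composite image $H\subseteq \ref{G223}\times\GL_2(\mF_5)$ is proper and apply Goursat's lemma (Lemma~\ref{Goursat}): since $H$ projects onto each factor, there exists a nontrivial common quotient $Q$ of $\ref{G223}$ and $\GL_2(\mF_5)$. As $\ref{G223}\cong\mZ/3\mZ$ has prime order, necessarily $Q\cong\mZ/3\mZ$. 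But the derived subgroup of $\GL_2(\mF_5)$ contains the perfect group $\SL_2(\mF_5)$, so $\GL_2(\mF_5)^{\mathrm{ab}}\cong\mF_5^{\times}\cong\mZ/4\mZ$ has order coprime to $3$ and admits no quotient of order $3$ --- a contradiction. Hence $H$ is the full product.

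I do not anticipate a substantive obstacle, since everything reduces to the classification in Theorem~\ref{2.1}(\ref{2.1.1}) plus a short Goursat computation. The one point meriting care is reading the theorem correctly: the phrase ``only possible image'' asserts equality with the full direct product $\ref{G223}\times\ref{G59}$, so in the non-surjective sub-case no residual Goursat identification can occur and the statement drops out without further work.
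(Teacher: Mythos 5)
Your proposal is correct and follows essentially the same route as the paper: Theorem~\ref{2.1}(\ref{2.1.1}) to control the non-surjective case, plus Goursat's lemma and the observation that the relevant mod-$5$ image admits no quotient of order $3$. The only cosmetic difference is that the paper runs the Goursat argument directly on $\ref{G59}$ (noting it has no index-$3$ normal subgroup) rather than splitting on surjectivity of $\rho_{E,5}$ and reading Theorem~\ref{2.1} as asserting equality with the full product; both hinge on the same group-theoretic fact.
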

\begin{proof}
From Theorem~\ref{2.1} and Proposition \ref{3.2}, we know that there is only one possibility for non-surjective composite-$(2,5)$ image, namely the image is conjugate to $\ref{G223}\times \ref{G59}$. The subgroup $\ref{G59}$ do not contain an index 3 normal subgroup, hence Lemma \ref{Goursat}, implies that there does not exists a subgroup $G \leq \GL_2(\ZZ{2}) \times \GL_2(\ZZ{5})$ that projects onto the mod 2 and mod 5 image. Therefore, these curves cannot have entanglement fields via the Galois correspondence.
\end{proof}

\subsection{$(2,7)$-entanglement fields}\label{8.3}
From Theorem~\ref{2.1}, the only possibility for simultaneous non-surjective composite-$(2,7)$ image of Galois is $\ref{G223} \times \ref{G77} \leq \GL_2(\ZZ{2}) \times \GL_2(\ZZ{7})$. The subgroup $G_7$ does contain an index 3, normal subgroup, so the points on the modular curve $X_{\ref{G223},\ref{G77}}(14)$ correspond to $j$-invariants of elliptic curves with possible entanglement fields coming from $\mQ(E[2]) \, \cap \, \mQ(E[7])$. Since such an elliptic curve $E$ has $7$-division field of degree 252, it is computationally inefficient to study the subfields of $\mQ(E[7])$ or even $\mQ(x(E[7]))$, where the latter number field contains the $x$-coordinates of the $7$-torsion points. Hence, in order to perform computations, we need to find a subfield of $\mQ(E[7])$ with manageable degree.

Since $Z(\GL_2(\ZZ{7})) \leq G_7$ and \,$\# Z(\GL_2(\ZZ{7})) = 6$, the fixed field $L \coloneqq \mQ(E[7])^{Z(\GL_2(\ZZ{7}))}$ is an index 6 subfield of $\mQ(E[7])$. From \cite[Table~5.1]{adelmann2001decomposition}, $L$ is a degree 42 number field defined by the $7\tth$-modular polynomial $\Phi_7(X,j_E)$. For non-CM $E$ coming from $X_{\ref{G223},\ref{G77}}(14)$, we compute degree 3 subfields of $L$ and check whether they are isomorphic to $\mQ(E[2])$; below, we give two examples of non-CM elliptic curves with mod 2 and mod 7 entanglement fields.

\begin{exam}
The non-CM elliptic curves
\begin{align*}
E_1 \colon &y^2 + xy = x^3 - 4/129825457969x - 1/1168429121721 \\
E_2 \colon & y^2 + xy = x^3 - 4/2209x - 1/19881
\end{align*}
have $(2,7)$-entanglement fields of degree three.
\end{exam}

\subsection{$(2,p^n)$-entanglement fields}
In \cite{rubin2001mod}, Rubin and Silverberg give explicit equations for elliptic curves over a field of characteristic $\neq 2,3$ with prescribed mod 2 image of Galois. By constructing an elliptic curves over $\mQ$ with special 2-division field, we exhibit an infinite family of elliptic curves with $(2,p^n)$-entanglement of degree three.

\begin{prop}
Let $p$ be a prime $\geq 7$ such that $3\mid p-1$ and $n$ a positive integer. Then there exist infinitely many elliptic curves over $E/\mQ$ such that $\mQ(E[2])\,\cap\,\mQ(E[p^n]) \iso L$, where $L$ is the degree 3 number field of $\mQ(\zeta_{p^n})$ by our assumption on $p$. Furthermore, we give an explicit parametrization of such elliptic curves.
\end{prop}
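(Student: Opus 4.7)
The plan is as follows. Since $3 \mid p-1$ and a fortiori $3 \mid p^n - 1$, the cyclic group $\Gal(\mQ(\zeta_{p^n})/\mQ) \iso \uZZ{p^n}$ contains a unique subgroup of index $3$, so $\mQ(\zeta_{p^n})$ contains a unique cubic subfield $L$, which is cyclic over $\mQ$. The Weil pairing gives $\mQ(\zeta_{p^n}) \subseteq \mQ(E[p^n])$ for every $E/\mQ$, and hence $L \subseteq \mQ(E[p^n])$ unconditionally. The strategy is thus to engineer $\mQ(E[2]) = L$: then $\mQ(E[2]) \cap \mQ(E[p^n]) \supseteq L$ automatically, while $[\mQ(E[2]):\mQ] = 3$ forces equality, since the intersection is a subfield of the cyclic cubic field $\mQ(E[2])$ and must be strictly larger than $\mQ$.

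To arrange $\mQ(E[2]) = L$, I would invoke the Rubin--Silverberg parametrization \cite{rubin2001mod} of elliptic curves over $\mQ$ with prescribed mod $2$ image. For the subgroup $A_3 \subseteq S_3 \iso \GL_2(\mF_2)$ together with the chosen realization $\Gal(L/\mQ) \iso A_3$, their construction supplies a one-parameter family of elliptic curves whose $2$-division field is exactly $L$. Concretely, if $f(x) \in \mQ[x]$ is a monic irreducible cubic with square discriminant and $\mQ[x]/(f) \iso L$, one can take a short Weierstrass model of the form $y^2 = f(x)$ and then apply the Rubin--Silverberg twisting recipe to produce the explicit family promised in the statement.

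Infinitude of the resulting family is essentially free. Since the Rubin--Silverberg moduli space of elliptic curves with $\mQ(E[2]) = L$ is one-dimensional over $\mQ$, letting the parameter range over $\mQ$ produces infinitely many distinct $j$-invariants. As a redundant check, starting from any single $E_0/\mQ$ with $\mQ(E_0[2]) = L$, the quadratic twists $E_0^{(d)}$ for squarefree $d \in \mQ\unit/(\mQ\unit)^2$ all satisfy $\mQ(E_0^{(d)}[2]) = L$ (the $2$-torsion $x$-coordinates of $E_0^{(d)} : y^2 = d^3 f(x/d)$ are $d \alpha_i$, generating the same field $L$), so one obtains infinitely many non-$\mQ$-isomorphic curves even at fixed $j$. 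Combining this with the $j$-variation in the Rubin--Silverberg family yields the desired infinite family.

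The main obstacle is computational rather than conceptual: to advertise an \emph{explicit} parametrization one must pin down a concrete cubic $f$ cutting out $L$ (for instance via a Gaussian period construction inside $\mQ(\zeta_{p^n})$) and then unwind the Rubin--Silverberg Weierstrass equations in terms of that $f$. One should also verify that the resulting $2$-division field is exactly $L$, not a strictly larger extension; this amounts to checking that the cubic $f$ remains irreducible with square discriminant after the twisting substitution, which is immediate from the Rubin--Silverberg setup. Once the family is written down, the conclusion $\mQ(E[2]) \cap \mQ(E[p^n]) = L$ is formal from the argument in the first paragraph.
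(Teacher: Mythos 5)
Your proposal is correct and follows essentially the same route as the paper: identify $L$ as the cyclic cubic subfield of $\mQ(\zeta_p)\subseteq\mQ(\zeta_{p^n})$, build an elliptic curve whose $2$-division field is $L$ from an explicit cubic cutting out $L$ (the paper uses Gauss's period polynomial $g(X)=X^3+X^2+(p-1)X/3-((p-1)/3+kp)/9$, exactly the Gaussian-period construction you gesture at), then apply the Rubin--Silverberg parametrization to get the one-parameter family and conclude via the Weil pairing. Your added remarks on why the intersection is exactly $L$ and on quadratic twists are fine but not needed beyond what the paper does.
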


\begin{proof}
Since $\varphi(p^n) = p^{n-1}(p-1)$ where $\varphi$ is the Euler-totient function, the cyclotomic field $\mQ(\zeta_{p^n})$ contains the degree 3 intermediate field $L$ of $\mQ(\zeta_p)$. Gauss \cite{gauss1966disquisitiones} showed that the minimal polynomial of $L$ is 
\begin{equation*}
g(X) = X^3 + X^2 + (p-1)X/3 - ((p-1)/3 + kp)/9
\end{equation*}
where $k$ is uniquely determined by the integral representation $4p = (3k - 2)^2 + 27N^2$. 

Let $E$ be the elliptic curve with defining polynomial $g(X)$. Using the change of variables $(X,Y) \rra (x-1/3,y)$, we find a Weierstrass model of the form
$$E\colon y^2 = x^3 - \frac{p}{3}x + \frac{p(2-3k)}{27}.$$
The construction of $E$ forces the 2-division field $\mQ(E[2])$ to be isomoprhic to $L$. Using \cite[Theorem~1.1]{rubin2001mod}, the elliptic curve 
\begin{align*}
\mE_t \colon y^2 = & \, 
 x^3 +  \frac{(1727pt^2 + p + 9/4k^2t^2 - 9/4k^2 - 3kt^2 + 3k + t^2 - 1)}{(p - 9/4k^2 + 3k - 1)}x \\ 
&  \tiny{\frac{(-1727pt^3 - 5181pt^2 + 3pt + p - 9/4k^2t^3 - 27/4k^2t^2)}{(p - 9/4k^2 + 3k - 1)}} + \\
& \frac{(-27/4k^2t - 9/4k^2 +
    3kt^3 + 9kt^2 + 9kt + 3k - t^3 - 3t^2 - 3t - 1)}{(p - 9/4k^2 + 3k - 1)}
\end{align*}
has $2$-torsion subgroup isomorphic to that of $E$ for $t \in \mQ$. The existence of the Weil pairing implies that elliptic curves of the form $\mE_t$ satisfy $\mQ(\mE_t[2]) \cap \mQ(\mE_t[p^n]) \iso L$.
\end{proof}

\begin{remark}
Above, we present the general equation for $\mE_t$. For a specific prime $p$ and unique $k$, the defining equation for $\mE_t$ can be quickly computed using the \texttt{Magma} instrinsic \texttt{RubinSilverbergPolynomials(2,j)}, where $j$ is the $j$-invariant of the elliptic curve $E$. 
\end{remark}

\subsection{$(2,3)$-entanglement fields}
In a recent work \cite{braujones2014elliptic}, Brau and Jones exhibit a modular curve of level 6 over $\mQ$ whose $\mQ$-rational points correspond to $j$-invariants of elliptic curves $E$ over $\mQ$ with $\mQ(E[2]) \subseteq \mQ(\zeta_3,\Delta_E^{1/3}) \subseteq \mQ(E[3])$ and hence $(2,3)$-entanglement fields. The construction of their modular curve begins with finding the unique index 6 normal subgroups $\cN \leq \GL_2(\ZZ{3})$ defined by
$$\cN \coloneqq \brk{\begin{psmallmatrix} x & -y \\ y & x \end{psmallmatrix} : x^2 + y^2 \equiv 1 \mod 3 } \sqcup  \brk{\begin{psmallmatrix} x & y \\ y & -x \end{psmallmatrix} : x^2 + y^2 \equiv -1 \mod 3}.$$

The authors observe that $\cN$ fits into the exact sequence
$$
\begin{tikzcd}
1 \arrow{r} & \cN \arrow[right hook->]{r} & \GL_2(\ZZ{3}) \arrow[two heads]{r}{\theta} & \GL_2(\ZZ{2}) \arrow{r} & 1.
\end{tikzcd}
$$
Their modular curve of level 6 corresponds to the subgroup $H' \leq \GL_2(\ZZ{6})$ coming from the graph of $\theta$
\begin{align*}
H' &\coloneqq \brk{(g_2,g_3) \in \GL_2(\ZZ{2}) \times \GL_2(\ZZ{3}) : g_2 = \theta(g_3)}.
\end{align*}
The points lying in the image of $j(X_{H'})$ correspond to $j$-invariants of elliptic curves over $\mQ$ satisfying the above division field condition. The surjectivity of $\theta$ tells us that such elliptic curves have surjective mod 2 image of Galois as well. We summarize and slightly clarify their results concerning these curves in the following theorem.

\begin{theorem}[\protect{\cite[Theorem~1.4]{braujones2014elliptic}}]\label{8.7}
Let $E$ be a non-CM elliptic curve over $\mQ$ with $j$-invariant of the form 
$$j_E = 2^{10}3^3t^3(1 - 4t^3)$$
where $t \in \mQ\setminus\brk{0,1/2}$. This family of elliptic curves has surjective mod 2 image of Galois and non-abelian entanglement fields $$\mQ(E[2]) \, \cap \, \mQ(E[3]) \iso \mQ(\zeta_3,\Delta_E^{1/3}).$$
\end{theorem}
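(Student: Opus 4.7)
The plan is to produce a model for the genus zero modular curve $X_{H'}$ corresponding to the graph subgroup $H' \subseteq \GL_2(\ZZ{6})$, read off the $j$-map as $j(t) = 2^{10}3^3 t^3(1-4t^3)$, and then exploit the graph structure of $H'$ together with standard facts about $2$-division fields to obtain the claimed entanglement.

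First, I would verify that $X_{H'}$ has genus zero with a rational cusp, so that $X_{H'} \iso \mP^1_{\mQ}$. Its $j$-map can be constructed by forming the normalization of the fibered product $X(2) \times_{\mP^1_{\mQ}} X_{\cN}(3)$, using the classical $j$-map of $X(2)$ and the $j$-map for $\cN \leq \GL_2(\mF_3)$ (computed as in Appendix~\ref{App:AppendixA}), and then choosing a Hauptmodul $t$ on the resulting $\mP^1_{\mQ}$. Direct manipulation yields the parametrization $j(t) = 2^{10}3^3 t^3(1-4t^3)$; the values $t \in \brk{0, 1/2}$ correspond to the CM $j$-invariant $0$ and are excluded.

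Next, given a non-CM $E/\mQ$ with $j_E = j(t_0)$ for some $t_0 \in \mQ \setminus \brk{0, 1/2}$, the moduli interpretation of $X_{H'}$ gives that $\rho_{E,6}(G_{\mQ})$ is conjugate to a subgroup of $H'$. Since $H'$ is the graph of $\theta$, the projection onto $\GL_2(\mF_2)$ identifies $\rho_{E,2}$ with $\theta \circ \rho_{E,3}$ up to conjugation, so $\mQ(E[2]) \subseteq \mQ(E[3])$ and thus $\mQ(E[2]) \, \cap \, \mQ(E[3]) = \mQ(E[2])$. To conclude mod $2$ surjectivity, I would check that no proper applicable subgroup of $H'$ whose associated modular curve carries infinitely many rational points still projects onto $\GL_2(\mF_2)$; this is a finite group-theoretic computation that forces $\rho_{E,6}(G_{\mQ}) = H'$ outside a thin exceptional set, hence $\rho_{E,2}(G_{\mQ}) = \GL_2(\mF_2)$.

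Finally, I would match $\mQ(E[2])$ with $\mQ(\zeta_3, \Delta_E^{1/3})$. Choose a Weierstrass model $y^2 = f(x)$ for $E$ parametrized by $t$; then $\mQ(E[2])$ is the splitting field of $f$, and the $S_3$-structure coming from surjective mod $2$ image decomposes it as $\mQ(E[2]) = \mQ(\alpha, \sqrt{\disc f})$ for any root $\alpha$ of $f$, with $\disc f \equiv \Delta_E$ modulo squares. A direct substitution using $j_E = j(t)$ verifies that $\Delta_E$ lies in $-3 \cdot (\mQ^{\times})^2$, so that $\mQ(\sqrt{\Delta_E}) = \mQ(\zeta_3)$, and that $\alpha$ generates the same cubic subextension of $\mQ(E[2])/\mQ$ as $\Delta_E^{1/3}$. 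Combined with the containment above, this gives the desired isomorphism. The main obstacle is the explicit derivation of the $j$-map formula, which requires a careful $q$-expansion or fibered-product calculation; once that formula is in hand, both the surjectivity of the mod $2$ image and the identification of the entanglement field reduce to Galois-theoretic bookkeeping on a concrete Weierstrass family.
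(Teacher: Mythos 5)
The first thing to note is that the paper offers no proof of Theorem~\ref{8.7}: it is imported verbatim (with minor clarification) from \cite{braujones2014elliptic}, so your proposal can only be judged against what a correct proof must contain. Judged that way, your opening step fails. The normalization of $X(2)\times_{\mP^1_{\mQ}} X_{\cN}(3)$ is the modular curve of the \emph{product} subgroup $\{I\}\times\cN\subseteq\GL_2(\mF_2)\times\GL_2(\mF_3)$, a degree $36$ cover of the $j$-line, whereas $X_{H'}$ has degree $6$ over the $j$-line. The defining feature of the graph subgroup $H'$ is that it is a proper subgroup of $\GL_2(\mF_2)\times\GL_2(\mF_3)$ which still surjects onto both factors, so it is invisible to the fibered-product construction of Section~\ref{S4}, which only ever produces curves attached to product subgroups. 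What you actually need is the further quotient of $X(2)\times_{\mP^1_{\mQ}}X_{\cN}(3)$ by the diagonal copy of $S_3\cong H'/(\{I\}\times\cN)$, and finding a Hauptmodul and the $j$-map $2^{10}3^3t^3(1-4t^3)$ for that quotient is the real content of \cite{braujones2014elliptic}. (There is also a smaller obstruction: $\det(\cN)=\{1\}$, so $\cN$ is not applicable, $X_{\cN}$ is not geometrically irreducible over $\mQ$, and there is no ``$j$-map for $\cN$ computed as in Appendix~\ref{App:AppendixA}''.)

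Two further points. Your surjectivity argument only yields $\rho_{E,2}(G_{\mQ})=\GL_2(\mF_2)$ ``outside a thin exceptional set,'' while the theorem asserts it for \emph{every} non-CM member of the family; the correct argument is group-theoretic, not geometric. If $\rho_{E,2}(G_{\mQ})$ were a proper subgroup $B$, then $\rho_{E,6}(G_{\mQ})\leq H'$ forces $\rho_{E,3}(G_{\mQ})\leq\theta^{-1}(B)$; for $B$ trivial or cyclic of order $3$ this preimage lies in $\SL_2(\mF_3)$ and so has non-surjective determinant, which is impossible, and for $B$ of order $2$ the preimage is a conjugate of $N_{\nsp}(3)$, forcing $j_E=s^3$, and the equation $2^{10}3^3t^3(1-4t^3)=s^3$ reduces to $x^3+y^3=2$, whose only affine rational solution gives the excluded value $t=1/2$. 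Finally, in your last step, verifying $\Delta_E\in -3\cdot(\mQ^{\times})^2$ only matches the quadratic resolvents of the two $S_3$-extensions $\mQ(E[2])$ and $\mQ(\zeta_3,\Delta_E^{1/3})$; to identify them you must show the splitting fields themselves coincide, which is either exactly what the moduli interpretation of $X_{H'}$ already gives you (making the computation redundant) or an explicit resolvent identity that is essentially equivalent to deriving the $j$-map in the first place.
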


Brau and Jones pose the question \cite[Question~1.1]{braujones2014elliptic} of classifying the triples $(E,m_1,m_2)$ with $E$ an elliptic curve over a number field $K$ and $m_1,m_2$ a pair of relatively prime integers for which the mod $m_1$ and mod $m_2$ entanglement field is non-abelian over $K$. We ask whether the elliptic curves defined in Theorem \ref{8.7} are the only ones with non-abelian  $(2,3)$-entanglement fields. By constructing covers of the modular curve $X_{H'}$, we find another family of elliptic curves with such entanglement fields and provide a complete answer to \cite[Question~1.1]{braujones2014elliptic} in the case where $K = \mQ$ and $(m_1,m_2) = (2,3)$.

\begin{theorem}\label{8.8}
There exist infinitely many non-CM elliptic curves $E$ over $\mQ$ with composite-$(2,3)$ image of Galois conjugate to $\GL_2(\ZZ{2}) \times \ref{G33}$ and non-abelian entanglement fields $$\mQ(E[2]) \, \cap \, \mQ(E[3]) \iso \mQ(\zeta_3,\Delta_E^{1/3}) .$$
In particular, these curves either satisfy
$$\mQ(E[2]) \, \cap \, \mQ(E[3]) \iso  \mQ(\zeta_3,\Delta_E^{1/3})\iso \mQ(x(E[3])),$$
or
$$\mQ(E[2]) \, \cap \, \mQ(E[3]) \iso  \mQ(\zeta_3,\Delta_E^{1/3}) \iso \mQ(E[3]).$$
Furthermore, we provide a parametrization of such elliptic curves.
\end{theorem}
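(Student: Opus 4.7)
My plan is to mimic and extend the Brau--Jones argument underlying Theorem \ref{8.7}, with the key change that we drop their surjectivity hypothesis on the mod 3 image and instead only require $\rho_{E,3}(G_\mQ) \subseteq \ref{G33}$. The mechanism producing non-abelian entanglement remains the common $S_3$-quotient arising from the short exact sequence
$$ 1 \longrightarrow \cN \longrightarrow \GL_2(\mF_3) \xrightarrow{\theta} \GL_2(\mF_2) \longrightarrow 1 $$
of Brau--Jones. First I would restrict $\theta$ to the applicable subgroup $\ref{G33}$ and verify directly from the explicit description given in Appendix \ref{App:AppendixA} that $\theta|_{\ref{G33}}\colon \ref{G33}\twoheadrightarrow S_3$ is still surjective. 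Granted this, Goursat's lemma (Lemma \ref{Goursat}), applied to the pair $(\GL_2(\mF_2),\ref{G33})$ with their two maps to $S_3$, produces the subgroup
$$ H \coloneqq \brk{(g_2,g_3)\in \GL_2(\mF_2)\times \ref{G33} : g_2 = \theta(g_3)}, $$
which sits as a proper subgroup of $\GL_2(\mF_2)\times \ref{G33}$ of index 6 and is precisely the combinatorial fingerprint of the claimed entanglement.

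The second step is to construct the associated composite level modular curve $X_H$ following Section~\ref{S4}, namely as the normalization of $X_{H'}(6)\times_{X(1)} X_{\ref{G33}}(3)$ where $X_{H'}(6)$ is the Brau--Jones modular curve from Theorem \ref{8.7}. Both factors are rational with known explicit $j$-maps (the $j$-map of $X_{\ref{G33}}(3)$ is recorded in Appendix \ref{App:AppendixA}), and the degree of the projection $X_H\rra X_{\ref{G33}}(3)$ is small enough that one should find $X_H$ to again be rational (genus zero with a rational point), thereby yielding the desired one-parameter parametrization. The $\mQ$-points of $X_H$ classify precisely the elliptic curves with mod 2 image $\GL_2(\mF_2)$, mod 3 image inside $\ref{G33}$, and composite-$(2,3)$ image lying inside $H$, which is the required entanglement regime.

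Next, for such an $E/\mQ$, I would verify $\mQ(E[2])\cap \mQ(E[3]) \iso \mQ(\zeta_3,\Delta_E^{1/3})$ via Galois theory. Because $\rho_{E,6}(G_\mQ)\subseteq H$ with the two projections of $H$ factoring through the same $S_3$, the Galois correspondence identifies the intersection as the unique degree-6 subfield on which the mod 2 and mod 3 Galois actions agree; the Brau--Jones identification of this common $S_3$-subfield with $\mQ(\zeta_3,\Delta_E^{1/3})$ depends only on the $S_3$-quotient (Weil pairing giving $\mQ(\zeta_3)\subset \mQ(E[3])$ and the cube root identity relating $\Delta_E^{1/3}$ to an element of $\mQ(x(E[3]))$), and so survives the restriction to $\ref{G33}$. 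Finally, the dichotomy in the conclusion is obtained by tracking whether $-I\in \rho_{E,3}(G_\mQ)$: by applicability, $\pm \rho_{E,3}(G_\mQ) = \ref{G33}$ contains $-I$, but the actual image is either $\ref{G33}$ (order $12$, so $\mQ(x(E[3]))$ has degree $6$ and coincides with the intersection) or an index-2 subgroup $\ref{G33}^+\not\ni -I$ (order $6$, so $\mQ(E[3])$ itself has degree $6$ and coincides with the intersection). Both cases will be realised as distinguished sub-families of $X_H(\mQ)$, separated by a quadratic twist parameter.

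The main obstacle is the explicit Galois-theoretic identification in the penultimate step: checking that $\mQ(\zeta_3,\Delta_E^{1/3})\subset \mQ(E[3])$ for curves with only $\ref{G33}$ mod 3 image requires redoing, at the level of the Borel, the field-theoretic identification that Brau--Jones performed for the full $\GL_2(\mF_3)$. Once that is in hand, the distinction between the two subcases, and the parametrization, are direct consequences of the explicit $j$-map on $X_H$ together with a computation verifying that the family contains representatives in each subcase. I expect the explicit parametrization in the conclusion to follow by composing the Rubin--Silverberg-type construction used in Section \ref{8.3} with the $j$-map of $X_H$.
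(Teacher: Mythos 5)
Your proposal follows essentially the same route as the paper: you restrict the Brau--Jones map $\theta$ to the Borel $\ref{G33}$ (whose kernel there is exactly the index-6 normal subgroup $G'=\langle -I\rangle$), form the graph subgroup $H''\leq \GL_2(\mF_2)\times\ref{G33}$, realize $X_{H''}$ as the genus-0 fibered product of the Brau--Jones curve with $X_{\ref{G33}}(3)$ over the $j$-line, and split the two cases of the entanglement according to whether $-I$ lies in the mod-3 image (image $\ref{G33}$ versus $\ref{H331}$ or $\ref{H332}$), exactly as in the paper. The field-theoretic identification you flag as the ``main obstacle'' is handled in the paper simply by noting $H''\leq H'$, so that points of $X_{H''}$ map to points of the Brau--Jones curve and the containment $\mQ(E[2])\subseteq\mQ(\zeta_3,\Delta_E^{1/3})\subseteq\mQ(E[3])$ is inherited rather than re-derived.
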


\begin{proof}
Let $\ref{G33}$ be the level 3 applicable subgroup from List \ref{list3}, which we can identify as the Borel subgroup of $\GL_2(\ZZ{3})$. There exists a unique index 6 normal subgroup of $\ref{G33}$, namely 
$$G' \coloneqq \left\langle \begin{psmallmatrix} 2 & 0 \\ 0 & 2 \end{psmallmatrix} \right\rangle.$$
Since $G' \leq \cN$, we have the following exact sequences
$$
\begin{tikzcd}
1 \arrow{r} & \cN \arrow[right hook->]{r} & \GL_2(\ZZ{3})   \arrow[two heads]{r}{\theta_1} & \GL_2(\ZZ{2}) \arrow{r} \arrow[equal]{d} & 1 \\
1 \arrow{r} & G' \arrow[right hook->]{r} \arrow[right hook->]{u} & G_3 \arrow[two heads]{r}{\theta_2} \arrow[right hook->]{u}& \GL_2(\ZZ{2}) \arrow{r} & 1.
\end{tikzcd}
$$
Let 
\begin{align*}
H'' &\coloneqq \brk{(g_2,g_3) \in \GL_2(\ZZ{2}) \times \ref{G33} : g_2 = \theta_2(g_3)}
\end{align*}
denote the graph of $\theta_2$. Since $H''\leq H'$, there is a map between the modular curves $X_{H''} \rra X_{H'}$. Using List \ref{list3}, we can construct the level 6 modular curve corresponding to $H''$, namely
$$X_{H''}\colon 2^{10}3^3s^3(1 - 4s^3) = \frac{27(t+1)(t+9)^3}{t^3}.$$
This a genus 0 curve endowed with a rational point, hence isomorphic to $ \mP_{\mQ}^1$. The rational points on the curve $X_{H''}$ correspond to elliptic curves over $\mQ$ with composite-$(2,3)$ image conjugate to a subgroup of $\GL_2(\ZZ{2}) \times \ref{G33}$ and $\mQ(E[2]) \subseteq \mQ(\zeta_3,\Delta_E^{1/3}) \subseteq \mQ(E[3])$.

The surjectivity of $\theta_2$ implies that the mod 2 image is surjective, and hence the conditions on $\rho_{E,3}(G_{\mQ})$ imply that the 3-division field has degree at least 12 and contains $\mQ(E[2]) \subset \mQ(\zeta_3,\Delta_E^{1/3})$. Therefore, the rational points on $X_{H''}$ classify elliptic curves with non-abelian entanglement $$\mQ(E[2]) \, \cap \, \mQ(E[3]) \iso \mQ(\zeta_3,\Delta_E^{1/3}).$$
When $\rho_{E,3}(G_{\mQ}) $ is conjugate to $ \ref{G33}$ (equivalently when the composite-$(2,3)$ image surjects onto $\ref{G33}$), we have
$$\mQ(E[2]) \, \cap \, \mQ(E[3]) \iso \mQ(\zeta_3,\Delta_E^{1/3})\iso \mQ(x(E[3])),$$
and when $\rho_{E,3}(G_{\mQ}) $ is conjugate to $ \ref{H331}$ or $\ref{H332}$, then 
$$\mQ(E[2]) \, \cap \, \mQ(E[3]) \iso \mQ(\zeta_3,\Delta_E^{1/3})\iso \mQ(E[3]).$$
The parametrization for these curves can be found at \cite{JSM2017}.
\end{proof}

\begin{coro}\label{8.9}
There do not exist non-CM $j$-invariants outside of those from Theorems \ref{8.7} and \ref{8.8} corresponding to elliptic curves with non-abelian $(2,3)$-entanglement fields.
\end{coro}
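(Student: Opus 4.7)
The plan is to reduce the classification to a finite group-theoretic check via Goursat's lemma and then match the surviving cases against the modular curves appearing in Theorems~\ref{8.7} and \ref{8.8}.

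First, I would argue that a non-abelian entanglement field $K \coloneqq \mQ(E[2]) \cap \mQ(E[3])$ is forced to equal $\mQ(E[2])$. Since $K \subseteq \mQ(E[2])$, the group $\Gal(K/\mQ)$ is a quotient of $\Gal(\mQ(E[2])/\mQ) \hookrightarrow S_3$; the only non-abelian quotient of a subgroup of $S_3$ is $S_3$ itself, so $[K:\mQ] = 6$, $K = \mQ(E[2])$, and $\rho_{E,2}$ is surjective. In particular $\mQ(E[2]) \subseteq \mQ(E[3])$.

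Next, I would apply Lemma~\ref{Goursat} to $G \coloneqq (\rho_{E,2} \times \rho_{E,3})(G_{\mQ}) \subseteq \GL_2(\mF_2) \times G_3$, where $G_3 \coloneqq \rho_{E,3}(G_{\mQ})$. The inclusion from the previous step means $G$ is cut out inside the product by a surjection $\psi \colon G_3 \twoheadrightarrow \GL_2(\mF_2) \iso S_3$ whose kernel fixes $\mQ(E[2])$ inside $\mQ(E[3])$. The problem thus reduces to enumerating all possible mod-$3$ images $G_3$ (either $\GL_2(\mF_3)$ itself or an applicable subgroup in the sense of Definition~\ref{3.1}) admitting $S_3$ as a quotient, and matching each to the corresponding modular curve.

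The enumeration is short. The group $\GL_2(\mF_3)$ has a unique index-$6$ normal subgroup $\cN$, with $\GL_2(\mF_3)/\cN \iso S_3$; this is the Brau--Jones setting of Theorem~\ref{8.7}. The Borel $\ref{G33}$ has order $12$ and centre $\langle -I\rangle$ of order $2$, and since $\ref{G33}$ is non-abelian, its quotient $\ref{G33}/\langle -I\rangle$ is the non-abelian group of order $6$, i.e.~$S_3$; this surjection restricts to $S_3$-quotients on the applicable subgroups $\ref{H331}, \ref{H332} \subseteq \ref{G33}$, which are precisely the cases classified by Theorem~\ref{8.8}. For the remaining applicable subgroups $\ref{G13}, \ref{G23}, \ref{G43}$, a direct inspection of the lists in Appendix~\ref{App:AppendixA} shows that none admits $S_3$ as a quotient: the orders either fail to be divisible by $6$, or the derived series forces every order-$6$ quotient to be cyclic.

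Finally, I would conclude that every non-CM $E/\mQ$ with non-abelian $(2,3)$-entanglement has $j$-invariant on $X_{H'}$ from Theorem~\ref{8.7} or on $X_{H''}$ from Theorem~\ref{8.8}, which is exactly the statement of the corollary. The main obstacle is the group-theoretic verification in the enumeration step: although it is a finite check, one must use the precise list of applicable mod-$3$ subgroups from Appendix~\ref{App:AppendixA} rather than a general enumeration of subgroups of $\GL_2(\mF_3)$, in order to rule out any exceptional subgroup of order divisible by $6$ sneaking through.
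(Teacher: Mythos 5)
Your proposal is correct and follows essentially the same route as the paper: Goursat's lemma reduces the question to finding mod-$3$ images admitting $S_3$ as a quotient (equivalently, applicable level-$3$ subgroups with an index-$6$ normal subgroup), and the enumeration yields exactly $\GL_2(\mF_3)$, $\ref{G33}$, $\ref{H331}$, $\ref{H332}$, i.e.\ the cases of Theorems~\ref{8.7} and \ref{8.8}. Your preliminary observation that a non-abelian entanglement forces $\mQ(E[2])\cap\mQ(E[3])=\mQ(E[2])$ with Galois group $S_3$ is the same fact the paper invokes by noting $S_3$ is the only non-abelian mod $2$ image, just spelled out more explicitly.
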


\begin{proof}
The only applicable subgroups of level 3 that have an index 6, normal subgroup are $\ref{G33}$, $\ref{H331}$, and $\ref{H332}$ where the latter two subgroups are index two subgroups of the first. In particular, \cite[Theorem~1.2]{zywina10maximal} asserts that elliptic curves with such mod 3 images have the same $j$-invariant; curves with the latter two images of Galois contain rational $3$-torsion whereas the first does not. 
Since the $S_3$ is the only non-abelian mod 2 image of Galois, our result follows from Lemma~\ref{Goursat}.
\end{proof}
\clearpage
\appendix
\section{Applicable prime level subgroups} \label{App:AppendixA}
In this appendix, we reproduce the list of applicable subgroups from \cite{zywinapossible} and give the rational function expressing the modular $j$-invariant for certain exceptional primes $\ell$ in addition to presenting these subgroups as lattices in $\GL_2(\ZZ{\ell})$ for easy navigation. We decorate the cases where we cannot provably analyze the rational points on the modular curve $X_{\ref{G223},G_{n,\ell}}(2\cdot \ell)$ with a tilde. Also the subgroups are hyperlinked to their definition.

\subsection{\underline{List}$(\ell = 2)$}\label{list2}
Up to conjugacy there are three proper subgroups of $\GL_2(\ZZ{2})$, all of which are arithmetically maximal:
\begin{align*}
\customlabel{G221}{G_{1,2}} &= \brk{I}, 
& \customlabel{G222}{G_{2,2}}& = \brk{I,\begin{psmallmatrix} 1 & 1 \\ 0 & 1\end{psmallmatrix}}, 
& \customlabel{G223}{G_{3,2}}& = \brk{I,\begin{psmallmatrix} 1 & 1 \\ 1 &0\end{psmallmatrix},\begin{psmallmatrix}0 & 1 \\ 1 & 1\end{psmallmatrix}}.
\end{align*}
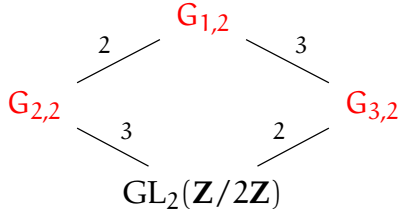
\begin{figure}[h!]
$$
\begin{tikzcd}[column sep = 1em, row sep = 1em]
{} & \ref{G221} & {} \\
\ref{G222} \arrow[dash]{ru}{2} &{}& \ref{G223}\arrow[dash, swap]{lu}{3} \\
{} & \GL_2(\ZZ{2}) \arrow[dash]{ru}{2} \arrow[dash, swap]{lu}{3} & {}\\
\end{tikzcd}
$$
\caption{Applicable subgroup lattice for $\GL_2(\ZZ{2})$}
\end{figure}

 From \cite[Theorem~1.1]{zywinapossible}, $\rho_{E,2}(G_{\mQ})$ is conjugate in $\GL_2(\mF_{2})$ to a subgroup of $G_i$ if and only if $j_E$ is of the form:
\begin{align*}
J_1(t) &= 256 \frac{(t^2 + t+ 1)^3}{t^2(t+1)},  
&J_2(t) &= 256 \frac{(t+ 1)^3}{t}, 
 &J_3(t) &= t^2 + 1728
\end{align*}
for some $t\in \mQ$ and each respective $i$.

\subsection{\underline{List}$(\ell = 3)$}\label{list3}
Define the following subgroups of $\GL_2(\ZZ{3})$:
\begin{enumerate}
\item[$\bullet$] let $\customlabel{G13}{G_{1,3}}$ be the group $C_{\spl}(3)$,  
\item[$\bullet$] let $\customlabel{G23}{G_{2,3}}$ be the group $N_{\spl}(3)$,
\item[$\bullet$] let $\customlabel{G33}{G_{3,3}}$ be the group $B(3)$,
\item[$\bullet$] let $\customlabel{G43}{G_{4,3}}$ be the group $N_{\nsp}(3)$,
\item[$\bullet$] let $\customlabel{H311}{H_{\brk{1,1},3}}$ be the subgroup consisting of the matrices of the form $\begin{psmallmatrix} 1 & 0 \\ 0 & *\end{psmallmatrix}$,
\item[$\bullet$] let $\customlabel{H331}{H_{\brk{3,1},3}}$ be the subgroup consisting of the matrices of the form $\begin{psmallmatrix} 1 & * \\ 0 & * \end{psmallmatrix}$,
\item[$\bullet$] let $\customlabel{H332}{H_{\brk{3,2},3}}$ be the subgroup consisting of the matrices of the form $\begin{psmallmatrix} * & * \\ 0 & 1 \end{psmallmatrix}$.
\end{enumerate}
Each of the groups $G_i$ contain $-I$, and the groups $H_{\brk{i,j},3}$ do not contain $-I$. Moreover, we have $G_{i,3} = \pm H_{\brk{i,j},3}$. 
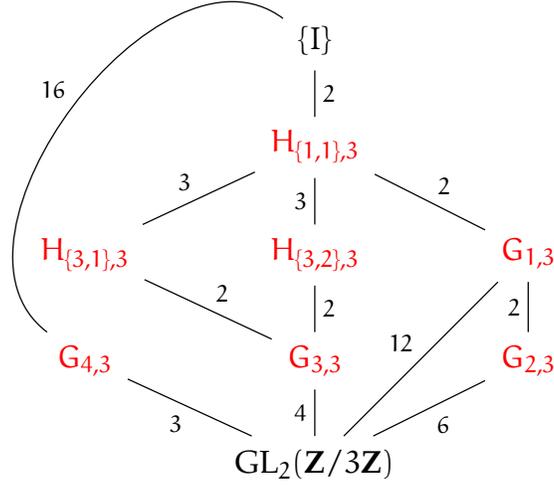
\begin{figure}[h!]
$$
\begin{tikzcd}[row sep = 1.5em]
{} & {} & \brk{I} & {} & {} \\
{} & {}& \ref{H311}\arrow[dash,swap]{u}{2}& {}& {} \\
{} & \ref{H331}\arrow[dash, crossing over]{ur}{3} &  \ref{H332}\arrow[dash]{u}{3}  & \ref{G13} \arrow[dash, swap]{lu}{2} & {} \\
{} &\ref{G43}  \arrow[dash, bend left = 90]{ruuu}{16}& \ref{G33}  \arrow[dash, swap]{u}{2} \arrow[dash, swap, crossing over]{ul}{2} & \ref{G23} \arrow[dash]{u}{2} & {} \\
{} & {} & \GL_2(\ZZ{3}) \arrow[dash, swap]{ru}{6} \arrow[dash]{ul}{3} \arrow[dash]{ruu}{12} \arrow[dash]{u}{4}& {} & {}
\end{tikzcd}
$$
\caption{Applicable subgroup lattice for $\GL_2(\ZZ{3})$}
\end{figure}

From \cite[Theorem~1.2(ii)]{zywinapossible}, $\rho_{E,3}(G_{\mQ})$ is conjugate in $\GL_2(\mF_{3})$ to a subgroup of $G_{i,3}$ if and only if $j_E$ is of the form:
\begin{align*}
J_1(t) &= 27\frac{(t+1)^3(t+3)^3(t^2+3)^3}{t^3(t^2+3t+3)^3}, 
&J_2(t) &= 27\frac{(t+1)^3(t-3)^3}{t^3}, \\
J_3(t) &= 27\frac{(t+1)(t+9)^3}{t^3},  
&J_4(t) &= t^3
\end{align*}
for some $t \in \mQ$ and each respective $i$. Furthermore, \cite[Theorem~1.2(iii,iv)]{zywinapossible} provides explicit conditions (isomorphisms) when $\rho_{E,3}(G_{\mQ})$ is conjugate to $H_{\brk{i,j},3}$ for $i=1,3$ and $j = 1,2$.

\subsection{\underline{List}$(\ell = 5)$}\label{list5}
Define the following subgroups of $\GL_2(\ZZ{5})$:
\begin{enumerate}
\item[$\bullet$] let $\customlabel{G51}{G_{1,5}}$ be the subgroup consisting of the matrices of the form $\pm \begin{psmallmatrix}1 & 0\\ 0 & * \end{psmallmatrix}$,
\item[$\bullet$] let $\customlabel{G52}{G_{2,5}}$ be the group $C_{\spl}(5)$,
\item[$\bullet$] let $\customlabel{G53}{G_{3,5}}$ be the unique subgroup of $N_{\nsp}(5)$ of index 3; it is generated by $\begin{psmallmatrix}2 & 0\\ 0 & 2 \end{psmallmatrix}, \begin{psmallmatrix}1 & 0\\ 0 & -1\end{psmallmatrix}$, and $\begin{psmallmatrix}0 & 1\\ 3 & 0 \end{psmallmatrix}$,
\item[$\bullet$] let $\customlabel{G54}{G_{4,5}}$ be the group $N_{\spl}(5)$,
\item[$\bullet$] let $\customlabel{G55}{G_{5,5}}$ be the subgroup consisting of the matrices of the form $\pm \begin{psmallmatrix}* & *\\ 0 & 1 \end{psmallmatrix}$,
\item[$\bullet$] let $\customlabel{G56}{G_{6,5}}$ be the subgroup consisting of the matrices of the form $\pm \begin{psmallmatrix}1& *\\ 0 & * \end{psmallmatrix}$,
\item[$\bullet$] let $\customlabel{G57}{G_{7,5}}$ be the group $N_{\nsp}(5)$,
\item[$\bullet$] let $\customlabel{G58}{G_{8,5}}$  be the group $B(5)$,
\item[$\bullet$] let $\customlabel{G59}{G_{9,5}}$ be the unique maximal subgroup of $\GL_2(\ZZ{5})$ which contains $N_{\spl}(5)$; it is generated by $\begin{psmallmatrix}2 & 0\\ 0 & 1 \end{psmallmatrix},\begin{psmallmatrix}1 & 0\\ 0 & 2 \end{psmallmatrix},\begin{psmallmatrix}0 & -1\\1 & 0 \end{psmallmatrix}$, and $\begin{psmallmatrix}1 & 1\\ 1 & -1 \end{psmallmatrix}$,
\item[$\bullet$] let $\customlabel{H511}{H_{\brk{1,1},5}}$ be the subgroup consisting of the matrices of the form $\begin{psmallmatrix}1 & 0\\ 0 & * \end{psmallmatrix}$,
\item[$\bullet$] let $\customlabel{H512}{H_{\brk{1,2},5}}$ be the subgroup consisting of the matrices of the form $\begin{psmallmatrix}a^2 & 0\\ 0 & a \end{psmallmatrix}$,
\item[$\bullet$] let $\customlabel{H551}{H_{\brk{5,1},5}}$be the subgroup consisting of the matrices of the form $\begin{psmallmatrix}* & *\\ 0 & 1 \end{psmallmatrix}$,
\item[$\bullet$] let $\customlabel{H552}{H_{\brk{5,2},5}}$ be the subgroup consisting of the matrices of the form $\begin{psmallmatrix}a & *\\ 0 & a^2 \end{psmallmatrix}$,
\item[$\bullet$] let $\customlabel{H561}{H_{\brk{6,1},5}}$ be the subgroup consisting of the matrices of the form $\begin{psmallmatrix}1 & *\\ 0 & * \end{psmallmatrix}$,
\item[$\bullet$] let $\customlabel{H562}{H_{\brk{6,2},5}}$ be the subgroup consisting of the matrices of the form $\begin{psmallmatrix}a^2 & *\\ 0 & a \end{psmallmatrix}$.
\end{enumerate}
Each of the groups $G_i$ contain $-I$, and the groups $H_{\brk{i,j},5}$ do not contain $-I$. Moreover, we have $G_{i,5} = \pm H_{i,j}$. 
\begin{figure}[h!]
$$
\begin{tikzcd}[row sep=3.5em, column sep=4.5em]
{} & {} & \brk{I} & {} & {} \\
{} & {} & {} & \ref{H511}\arrow[dash]{lu}{4} & \ref{H512} \arrow[dash, swap]{llu}{4}\\
\ref{H551}\arrow[dash]{uurr}{20} & \ref{H552} \arrow[dash, swap, near start]{uur}{20} & \ref{H561} \arrow[dash]{uu}{20}  \arrow[dash, ]{ur}{5} &\ref{H562} \arrow[dash]{uul}{} \arrow[dash, near end]{ur}{5}& \ref{G51} \arrow[dash]{u}{2} \arrow[dash, near start]{lu}{ 2}\\
\ref{G53}\arrow[dash, bend left = 90]{rruuu}{16} & \ref{G55}\arrow[dash]{u}{ 2} \arrow[dash, crossing over]{lu}{2}& \ref{G56}\arrow[dash]{u}{2} \arrow[dash]{ru}{ 2}& {}& \ref{G52}\arrow[dash]{u}{2}\\
{}& {} & \ref{G58}\arrow[dash]{lu}{2} \arrow[dash]{u}{2} \arrow[dash]{ruur}{ 10}& {} & \ref{G54} \arrow[dash]{u}{2} \\
\ref{G57}\arrow[dash]{uu}{3} & {} & {} & {} & \ref{G59}  \arrow[dash]{u}{3} \\
{}& {} & \GL_2(\ZZ{5}) \arrow[dash]{uu}{6} \arrow[dash]{llu}{10} \arrow[dash, swap]{rru}{5} \arrow[dash, swap]{rruu}{15}& {} & 
\end{tikzcd}
$$
\caption{Applicable subgroup lattice for $\GL_2(\ZZ{5})$}
\end{figure}
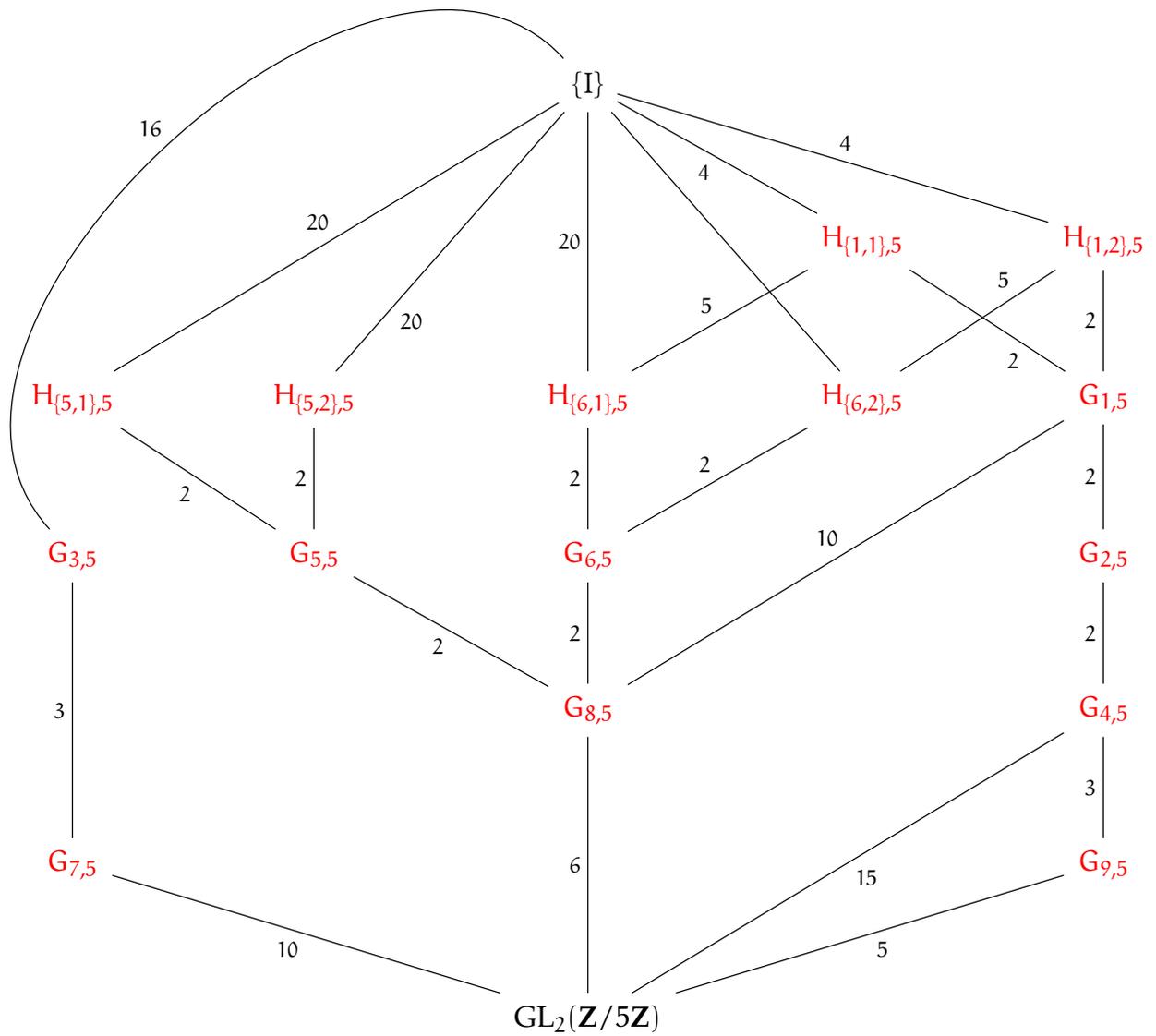

From \cite[Theorem~1.4(ii)]{zywinapossible}, $\rho_{E,5}(G_{\mQ})$ is conjugate in $\GL_2(\mF_{5})$ to a subgroup of  $G_{i,5}$ if and only if $j_E$ is of the form:
\begin{align*}
J_1(t) & =  \frac{(t^{20} + 228t^{15} + 494t^{10} - 228t^5 + 1)^3}{t^5(t^{10} - 11t^5  - 1)^5},  \\ 
J_2(t) & =  \frac{(t^2 + 5t + 5)^3(t^4+5t^2 + 25)^3(t^4 + 5t^3 + 20t^2 + 25t + 25)^3}{t^5(t^4 + 5t^3 + 15t^2 + 25t + 25)^5},\\ 
J_3(t) & =  \frac{5^4t^3(t^2 + 5t + 10)^3(2t^2 + 5t + 5)^3(4t^4 + 30t^3 + 95t^2 + 150t + 100)^3}{(t^2 +5t+5)^5(t^4 +5t^3 +15t^2 +25t+25)^5} ,\\
J_4(t) & =   \frac{(t+5)^3(t^2-5)^3(t^2+5t+10)^3}{(t^2+5t+10)^3}, \\
J_5(t) & =  \frac{ (t^4 + 228t^3 + 494t^2 - 228t + 1)^3}{t(t^2 - 11t -1)^5},\\
J_6(t) & =   \frac{(t^4 - 12t^3 + 14t^2 + 12t + 1)^3}{t^5(t^2-11t -1)},\\
J_7(t) & =  \frac{5^3(t+1)(2t+1)^3(2t^3-3t+3)^3}{(t^2 + t -1)^5},\\
J_8(t) & =  \frac{5^2(t^2 + 10t + 5)^3}{t^5},\\
J_9(t) & =  t^3(t^2+5t+40)
\end{align*}
for some $t\in \mQ$ and each respective $i$. Furthermore, \cite[Theorem~1.4(iii)]{zywinapossible} provides explicit conditions when $\rho_{E,5}(G_{\mQ})$ is conjugate to $H_{\brk{i,j},5}$ for $i=1,5,6$ and $j=1,2$.

\subsection{\underline{List}$(\ell = 7)$}\label{list7}
Define the following subgroups of $\GL_2(\ZZ{7})$:
\begin{enumerate}
\item[$\bullet$] let $\customlabel{G71}{G_{1,7}}$ be the subgroup of $N_{\spl}(7)$ consisting of elements of $C_{\spl}(7)$ with square determinant and elements of $N_{\spl}(7)\setminus C_{\spl}(7)$ with non-square determinant; it is generated by $\begin{psmallmatrix} 2 & 0 \\ 0 & 4 \end{psmallmatrix}, \begin{psmallmatrix} 0& 2 \\ 1 & 9\end{psmallmatrix}, $ and $\begin{psmallmatrix} -1 & 0 \\ 0 & -1\end{psmallmatrix}$,
\item[$\bullet$] let $\customlabel{G72}{G_{2,7}}$ be the group $N_{\spl}(7)$,
\item[$\bullet$] let $\customlabel{G73}{G_{3,7}}$ be the subgroup consisting of matrices of the form $\pm \begin{psmallmatrix} 1 & * \\ 0  &* \end{psmallmatrix}$,
\item[$\bullet$] let $\customlabel{G74}{G_{4,7}}$ be the subgroup consisting of matrices of the form $\pm \begin{psmallmatrix} * & * \\0 & 1\end{psmallmatrix}$,
\item[$\bullet$] let $\customlabel{G75}{G_{5,7}}$ be the subgroup consisting of matrices of the form $\begin{psmallmatrix} a & * \\ 0 & \pm a \end{psmallmatrix}$,
\item[$\bullet$] let $\customlabel{G76}{G_{6,7}}$ be the group $N_{\nsp}(7)$,
\item[$\bullet$] let $\customlabel{G77}{G_{7,7}}$ be the group $B(7)$,
\item[$\bullet$] let $\customlabel{H711}{H_{\brk{1,1},7}}$ be the subgroup generated by $\begin{psmallmatrix} 2 & 0 \\ 0 & 4\end{psmallmatrix}$ and $\begin{psmallmatrix}0 & 2 \\ 1 & 0 \end{psmallmatrix}$,
\item[$\bullet$] let $\customlabel{H731}{H_{\brk{3,1},7}}$ be the subgroup consisting of matrices of the form $\begin{psmallmatrix} 1& * \\0 & * \end{psmallmatrix}$,
\item[$\bullet$] let $\customlabel{H732}{H_{\brk{3,2},7}}$ be the subgroup consisting of matrices of the form $\begin{psmallmatrix} \pm 1 & * \\ 0 & a^2\end{psmallmatrix}$,
\item[$\bullet$] let $\customlabel{H741}{H_{\brk{4,1},7}}$ be the subgroup consisting of matrices of the form $\begin{psmallmatrix} * & * \\ 0 & 1\end{psmallmatrix}$,
\item[$\bullet$] let $\customlabel{H742}{H_{\brk{4,2},7}}$ be the subgroup consisting of matrices of the form $\begin{psmallmatrix}a^2 & * \\ 0 & \pm 1 \end{psmallmatrix}$,
\item[$\bullet$] let $\customlabel{H751}{H_{\brk{5,1},7}}$ be the subgroup consisting of matrices of the form $\begin{psmallmatrix} \pm a^2 & * \\ 0 & a^2\end{psmallmatrix}$,
\item[$\bullet$] let $\customlabel{H752}{H_{\brk{5,2},7}}$ be the subgroup consisting of matrices of the form $\begin{psmallmatrix} a^2 & * \\ 0 & \pm a^2\end{psmallmatrix}$,
\item[$\bullet$] let $\customlabel{H771}{H_{\brk{7,1},7}}$be the subgroup consisting of matrices of the form $\begin{psmallmatrix} * & * \\ 0 & a^2\end{psmallmatrix}$,
\item[$\bullet$] let $\customlabel{H772}{H_{\brk{7,2},7}}$ be the subgroup consisting of matrices of the form $\begin{psmallmatrix} a^2 & * \\ 0 & *\end{psmallmatrix}$. 
\end{enumerate}
Each of the groups $G_i$ contain $-I$, and the groups $H_{\brk{i,j},7}$ do not contain $-I$. Moreover, we have $G_{i,7} = \pm H_{\brk{i,j},7}$. 
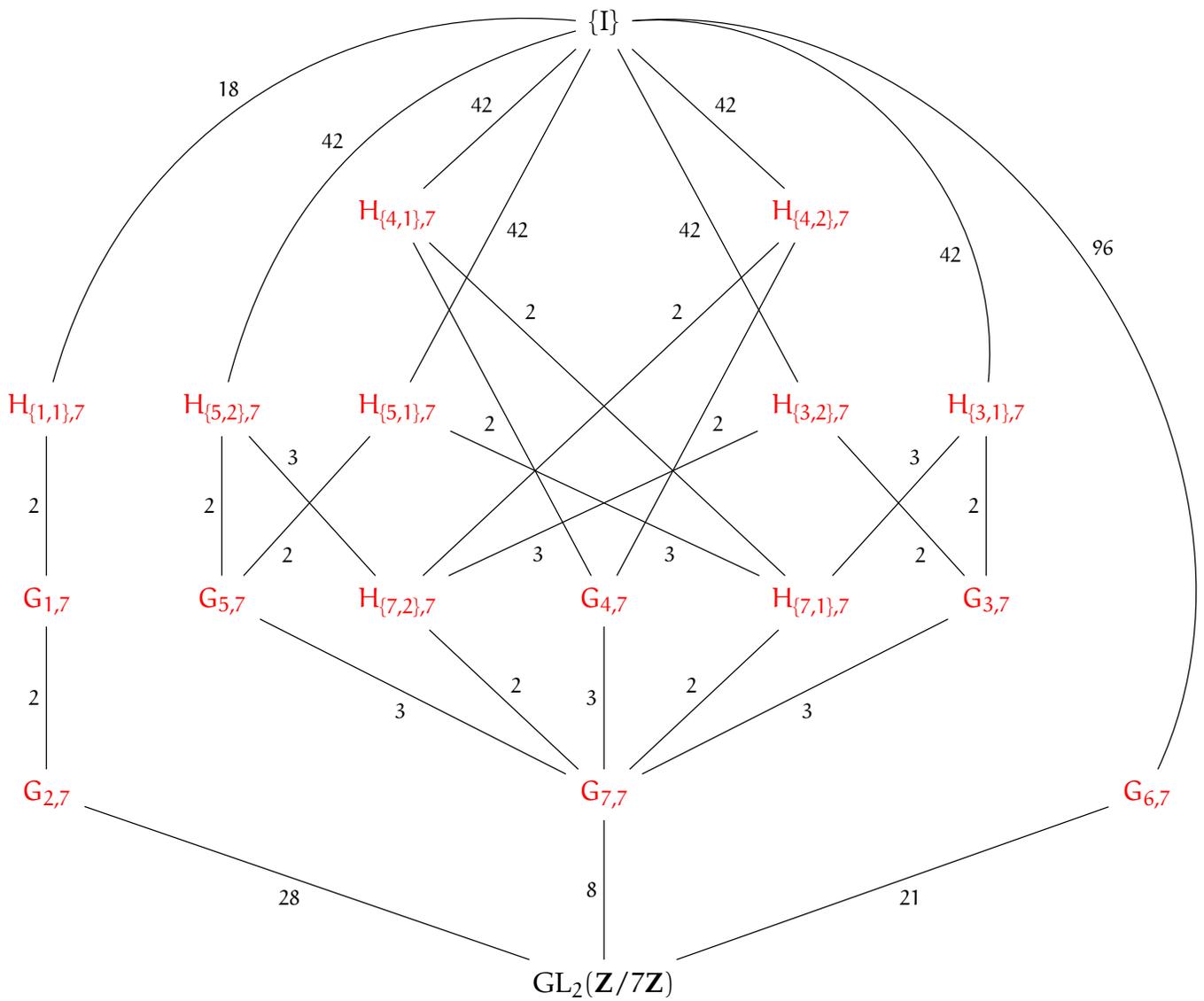
\begin{figure}[h!]
$$
\begin{tikzcd}[back line/.style={densely dotted},column sep = 2.5em, row sep = 5em]
{} & {} & {} & \brk{I} & {} & {} & {} \\
{} & {} & \ref{H741}\arrow[dash]{ur}{42}& {} & \ref{H742}\arrow[dash, swap]{ul}{42}& {} & {} \\
\ref{H711}\arrow[dash, bend left = 400]{uurrr}{18}& \ref{H752}\arrow[dash, bend left = 30 ]{uurr}{42}& \ref{H751}  \arrow[dash, swap]{uur}{42}& {} &  \ref{H732} \arrow[dash]{uul}{42}& \ref{H731} \arrow[dash, bend right = 50, near start]{uull}{42} {} &\\
\ref{G71} \arrow[dash]{u}{2}& \ref{G75} \arrow[dash]{u}{2}\arrow[dash, near start, swap]{ur}{2} & \ref{H772}\arrow[dash, near end, swap]{lu}{3} \arrow[dash, near start, swap ]{rru}{3}  \arrow[dash, near end]{uurr}{2}& \ref{G74}\arrow[dash, swap]{uur}{2}\arrow[dash,  ]{uul}{2}& \ref{H771} \arrow[dash, near end]{ru}{3} \arrow[dash, near start ]{llu}{3}  \arrow[dash, near end,swap]{uull}{2}& \ref{G73} \arrow[dash, near start]{lu}{2}\arrow[dash]{u}{2}  & {} \\
\ref{G72} \arrow[dash]{u}{2}& {}& {} & \ref{G77} \arrow[swap,dash]{rru}{3} \arrow[dash]{u}{3} \arrow[dash]{llu}{3} \arrow[dash]{ru}{2} \arrow[dash, swap]{lu}{2}& {} & {} & \ref{G76}\arrow[dash, bend right = 60, swap]{uuuulll}{96}\\
{} & {} & {} & \GL_2(\ZZ{7}) \arrow[dash, swap]{rrru}{21} \arrow[dash]{lllu}{28} \arrow[dash]{u}{8} & {} & {} & {} 
\end{tikzcd}
$$
\caption{Applicable subgroup lattice for $\GL_2(\ZZ{7})$}
\end{figure}

From \cite[Theorem~1.5(ii)]{zywinapossible}, $\rho_{E,7}(G_{\mQ})$ is conjugate in $\GL_2(\mF_{7})$ to a subgroup of  $G_{i,7}$ if and only if $j_E$ is of the form:
\begin{align*}
J_1(t) &= 3^3\cdot 5 \cdot 7^5/2^7, \\
J_2(t) &= \frac{t(t+1)^3(t^2 - 5t - 1)^3(t^2 - 5t + 8)^3(t^4 - 5t^3 + 8t^2 - 7t + 7)^3}{(t^3 - 4t^2 + 3t + 1)^7}, \\
J_3(t) &= \frac{(t^2 - t + 1)^3(t^6 - 11t^5 + 30t^4 - 15t^3- 10t^2 + t + 1)^3}{(t-1)^7t^7(t^3 - 8t^2 + 5t + 1)}, \\
J_4(t) &= \frac{(t^2 - t + 1)^3(t^6 + 229t^5 + 270t^4 - 1695t^3 + 1430t^2 - 235t + 1)^3}{(t-1)t(t^3-8t^2+5t+1)^7}, \\
J_5(t) &= -\frac{(t^2 - 3t - 3)^3(t^2 - t + 1)^3(3t^2 - 9t + 5)^3(5t^2 - t- 1)^3}{(t^3-2t^2-t+1)(t^3-t^2-2t+1)^7}, \\
J_6(t) &= \frac{64t^3(t^2+7)^3(t^2-7t+14)^3(5t^2-14y-7)^3}{(t^3-7t^2+7t+7)^7}, \\
J_7(t) &= \frac{(t^2+245t+2401)^3(t^2+13t+49)}{t^7}
\end{align*}
for some $t\in \mQ$ and each respective $i$. Furthermore, \cite[Theorem~1.5(iii,iv)]{zywinapossible} provides us with explicit conditions when $\rho_{E,7}(G_{\mQ})$ is conjugate to $H_{\brk{i,j},7}$ for $i=1,3,4,5,7$ and $j = 1,2$.

\subsection{\underline{List}$(\ell = 11)$}\label{list11}
Define the following subgroups of $\GL_2(\ZZ{11})$:
\begin{enumerate}
\item[$\bullet$] let $\customlabel{G111}{G_{1,11}}$ be the subgroup generated by $\pm \begin{psmallmatrix} 1 & 1 \\ 0 & 1\end{psmallmatrix}$ and $\begin{psmallmatrix} 4 & 0 \\ 0 & 6\end{psmallmatrix}$,
\item[$\bullet$] let $\customlabel{G112}{G_{2,11}}$ be the subgroup generated by $\pm \begin{psmallmatrix} 1 & 1 \\ 0 & 1\end{psmallmatrix}$ and $\begin{psmallmatrix} 5 & 0 \\ 0 & 7\end{psmallmatrix}$,
\item[$\bullet$] let $\customlabel{G113}{G_{3,11}}$  be the group $N_{\nsp}(11)$,
\item[$\bullet$] let $\customlabel{H1111}{H_{\brk{1,1},11}}$ be the subgroup generated by $ \begin{psmallmatrix} 1 & 1 \\ 0 & 1\end{psmallmatrix}$ and $\begin{psmallmatrix} 4 & 0 \\ 0 & 6\end{psmallmatrix}$,
\item[$\bullet$] let $\customlabel{H1112}{H_{\brk{1,2},11}}$ be the subgroup generated by $ \begin{psmallmatrix} 1 & 1 \\ 0 & 1\end{psmallmatrix}$ and $\begin{psmallmatrix} 7 & 0 \\ 0 & 5\end{psmallmatrix}$,
\item[$\bullet$] let $\customlabel{H1121}{H_{\brk{2,1},11}}$ be the subgroup generated by $ \begin{psmallmatrix} 1 & 1 \\ 0 & 1\end{psmallmatrix}$ and $\begin{psmallmatrix} 5 & 0 \\ 0 & 7\end{psmallmatrix}$,
\item[$\bullet$] let $\customlabel{H1122}{H_{\brk{2,2},11}}$ be the subgroup generated by $ \begin{psmallmatrix} 1 & 1 \\ 0 & 1\end{psmallmatrix}$ and $\begin{psmallmatrix} 6 & 0 \\ 0 & 4\end{psmallmatrix}$.
\end{enumerate}
Each of the groups $G_i$ contain $-I$, and the groups $H_{\brk{i,j},11}$ do not contain $-I$. Moreover, we have $G_{i,11} = \pm H_{\brk{i,j},11}$. 
\begin{figure}[h!]
$$
\begin{tikzcd}[column sep = 2.5em, row sep = 3em]
{} & {} & {} & \brk{I} & {} & {} \\
{} & \ref{H1121} \arrow[ dash]{urr}{110}& \ref{H1122} \arrow[dash, near start, swap]{ur}{110}& {} & \ref{H1111} \arrow[ dash, near start]{ul}{110}&\ref{H1112}  \arrow[ dash, swap]{ull}{110}\\ 
{}&{}&  \ref{G112}\arrow[dash, near end, swap]{ul}{2}\arrow[dash, near end]{u}{2}& \widetilde{\ref{G113}} \arrow[ dash]{uu}{240}& \ref{G111}\arrow[dash, near end, swap]{ur}{2}\arrow[dash, near end]{u}{2}&  \\
{} & {}& {} & \GL_2(\ZZ{11})  \arrow[dash]{u}{55}  \arrow[dash, swap]{ru}{60} \arrow[dash]{lu}{60}& {} & {} 
\end{tikzcd}
$$
\caption{Applicable subgroup lattice for $\GL_2(\ZZ{11})$}
\end{figure}
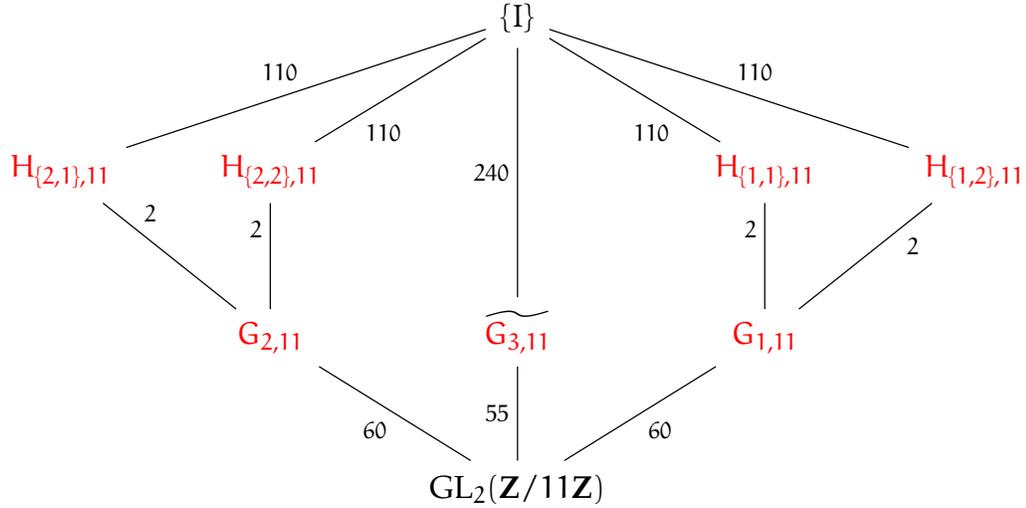

From \cite[Theorem~1.6(ii,iii)]{zywinapossible}, there are unique values for $j_E$ that correspond to $\pm \rho_{E,11}(G_{\mQ})$ being conjugate in $\GL_2(\ZZ{11})$ to a subgroup of $G_{1,11}$ and $G_{2,11}$. 

The modular curve $X_{G_3}(11) = X_{\nsp}^+(11)$ is the only one from Zywina's classification that has genus 1 with infinitely many rational points. To explicitly describe this modular curve, let $\sE$ be the elliptic curve over $\mQ$ defined by the Weierstrass equation $y^2 + y  = x^3 - x^2 - 7x + 10$ and let $\cO$ be the point at infinity. The Mordell--Weil group $\sE(\mQ)$ is an infinite cyclic group generated by the point $(4,5)$. Halberstadt \cite{halberstadt1998courbe} showed that $X_{\nsp}^+(11)$ is isomorphic to $\sE$ and that the morphism to the $j$-line corresponds to 
$$J(x,y) \coloneqq \frac{(f_1f_2f_3f_4)^3}{f_5^2f_6^{11}},$$
where 
\begin{align*}
f_1 &= x^2 + 3x - 6,  &f_2 &= 11(x^2 - 5y) + (2x^4 + 23x^3 - 72x^2 - 28x + 127), \\
f_3 &= 6_y + 11x - 19,  &f_4 &= 22(x-2)y + (5x^3 + 17x^2 - 112x - 120), \\
f_5 &= 11y + (2x^2 + 17x -34),  &f_6 &= (x-4)y - (5x- 9).
\end{align*}
From \cite[Theorem~1.6(iv)]{zywinapossible}, $\rho_{E,11}(G_{\mQ})$ is conjugate to $G_{3,11}$ if and only if $j_E = J(P)$ for some point $P \in \sE(\mQ)\setminus\brk{\cO}$. 
\begin{remark}
In \cite[Section~4.5.5]{zywinapossible}, Zywina gives explicit polynomials $A,B,C \in \mQ[x]$ of degree 55 such that for a non-CM elliptic curve $E/\mQ$, we have $j_E = J(P)$ for some $P \in \sE(\mQ) \setminus\brk{\cO}$ if and only if the polynomial $A(x)j_E^2 + B(x)j_E + C(x) \in \mQ[x]$ has a rational root. Hence given a numerical $j_E$, this gives a straightforward way to check the criterion that $\rho_{E,11}(G_{\mQ})$ is conjugate to a subgroup of $G_{3,11}$. 
\end{remark}

\subsection{\underline{List}$(\ell = 13)$}\label{list13}
Define the following subgroups of $\GL_2(\ZZ{13})$:
\begin{enumerate}
\item[$\bullet$] let $\customlabel{G131}{G_{1,13}}$ be the subgroup consisting of matrices of the form $\begin{psmallmatrix}* & * \\ 0 & b^3 \end{psmallmatrix}$,
\item[$\bullet$] let $\customlabel{G132}{G_{2,13}}$ be the subgroup consisting of matrices of the form $\begin{psmallmatrix}a^3 & * \\ 0 & *\end{psmallmatrix}$,
\item[$\bullet$] let $\customlabel{G133}{G_{3,13}}$ be the subgroup consisting of matrices $\begin{psmallmatrix}a & * \\ 0 & b\end{psmallmatrix}$ for which $(a/b)^4 = 1$,
\item[$\bullet$] let $\customlabel{G134}{G_{4,13}}$ be the subgroup consisting of matrices of the form $\begin{psmallmatrix}* & * \\ 0 & b^2 \end{psmallmatrix}$,
\item[$\bullet$] let $\customlabel{G135}{G_{5,13}}$ be the subgroup consisting of matrices of the form $\begin{psmallmatrix}a^2 & * \\ 0 & *\end{psmallmatrix}$,
\item[$\bullet$] let $\customlabel{G136}{G_{6,13}}$ be the group $B(13)$,
\item[$\bullet$] let $\customlabel{G137}{G_{7,13}}$ be the subgroup generated by the matrices $\begin{psmallmatrix} 2 & 0 \\ 0 & 2 \end{psmallmatrix}, \begin{psmallmatrix} 2& 0 \\ 0 & 3 \end{psmallmatrix}, \begin{psmallmatrix} 0 & -1 \\ 1 & 0 \end{psmallmatrix}$, and 
$\begin{psmallmatrix} 1& 1 \\ -1 & 1\end{psmallmatrix}$; it contains the scalar matrices and its image in $\PGL_2(\ZZ{13})$ is isomorphic to $\mathfrak{S}_4$,
\item[$\bullet$] let $\customlabel{H1341}{H_{\brk{4,1},13}}$ be the subgroup consisting of matrices of the form $\begin{psmallmatrix}* & * \\ 0 & a^4\end{psmallmatrix}$,
\item[$\bullet$] let $\customlabel{H1342}{H_{\brk{4,2},13}}$ be the subgroup consisting of matrices of the form $\begin{psmallmatrix}b^2 & * \\ 0 & a^4\end{psmallmatrix}$ and $\begin{psmallmatrix} 2 & 0 \\ 0 & 4 \end{psmallmatrix}$,
\item[$\bullet$] let $\customlabel{H1351}{H_{\brk{5,1},13}}$ be the subgroup consisting of matrices of the form $\begin{psmallmatrix}a^4 & * \\ 0 & *\end{psmallmatrix}$,
\item[$\bullet$] let $\customlabel{H1352}{H_{\brk{5,2},13}}$be the subgroup consisting of matrices of the form $\begin{psmallmatrix}a^4 & * \\ 0 & b^2\end{psmallmatrix}$ and $\begin{psmallmatrix} 4 & 0 \\ 0 & 2 \end{psmallmatrix}$.
\end{enumerate}
Each of the groups $G_{\brk{i,13}}$ contain $-I$, and the groups $H_{\brk{i,j},13}$ do not contain $-I$. Moreover, we have $G_{i,13} = \pm H_{\brk{i,j},13}$. 
\vspace*{-1cm}   
\begin{figure}[h]
$$
\begin{tikzcd}[back line/.style={densely dotted},column sep = 2em, row sep = 3em]
{} & {} & {} & \brk{I} & {} & {} \\
{} & \ref{H1351}  \arrow[ dash]{urr}{468}& \ref{H1352}  \arrow[ dash, swap]{ur}{468}& {} & \ref{H1341}  \arrow[dash]{ul}{468}& \ref{H1342} \arrow[ dash, swap]{ull}{468}\\ 
{}& \ref{G133} \arrow[dash, bend left = 125, near start]{uurr}{624}&  \ref{G135}\arrow[dash, near end, swap]{ul}{2}\arrow[dash, near end]{u}{2}& \ref{G132} \arrow[dash]{uu}{624}& \ref{G134} \arrow[dash, near end, swap]{ur}{2}\arrow[dash, near end]{u}{2}& \ref{G131} \arrow[ dash, bend right  = 120, swap]{uull}{624}  \\
\widetilde{\ref{G137}} \arrow[dash, bend left = 100, near start]{rrruuu}{288}& {}& {} & \ref{G136} \arrow[swap,dash]{rru}{3} \arrow[dash]{u}{3} \arrow[dash]{llu}{3} \arrow[dash]{ru}{2} \arrow[dash, swap]{lu}{2}& {} & {} \\
{} & {} & {} & \GL_2(\ZZ{13})  \arrow[dash]{lllu}{91} \arrow[dash]{u}{14} & {} & {}  
\end{tikzcd}
$$
\caption{Applicable subgroup lattice for $\GL_2(\ZZ{13})$}
\end{figure}
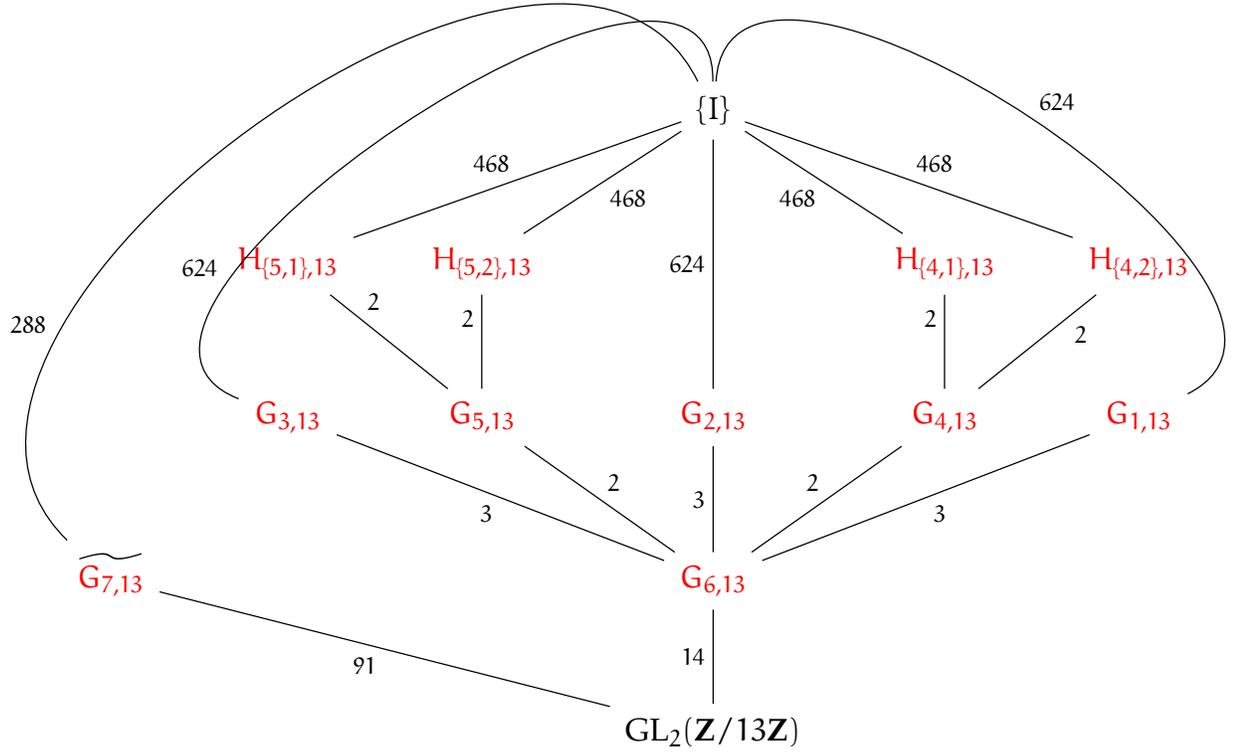

Define the polynomials
\begin{align*}
P_1(t) &= \begin{array}{c}
t^{12}+231t^{11}+269t^{10}-3160t^9+6022t^8-9616t^7+21880t^6 \\ -34102t^5+28297t^4-12455t^3+2876t^2-243t+1
\end{array}, \\
P_2(t) &= t^{12}-9t^{11}+29t^{10}-40t^9+22t^8-16t^7+40t^6-22t^5-23t^4+25t^3-4t^2-3t+1, \\
P_3(t) &= 	(t^4-t^3+2t^2-9t+3)(3t^4-3t^3-7t^2+12t-4)(4t^4-4t^3-5t^2+3t-1), \\ 
P_4(t) &= 	t^8+235t^7+1207t^6+955t^5+3840t^4-955t^3+1207t^2-235t+1, \\
P_5(t) &= 	t^8-5t^7+7t^6-5t^5+5t^3+7t^2+5t+1, \\
P_6(t) &= 	t^4+7t^3+20t^2+19t+1.
\end{align*}
From \cite[Theorem~1.8(ii)]{zywinapossible}, $\rho_{E,13}(G_{\mQ})$ is conjugate in $\GL_2(\ZZ{13})$ to $G_{i,13}$ if and only if $j_E$ is of the form:
\begin{align*}
J_1(t) &= \frac{(t^2 - t + 1)^3P_1(t)^3}{(t-1)t(t^3 - 4t^2 + t + 1)^{13}} &J_2(t)&= \frac{(t^2 - t +1)^3P_2(t)^3}{(t-1)^{13}t^{13}(t^3 - 4t^2 + t +1)} \\
J_3(t) &= -\frac{13^4(t^2 - t + 1)^3P_3(t)^3}{((t^3-4t^2+t+1)^{13}(5t^3-7t^2-8t+5))} &J_4(t)&= \frac{(t^4 - t^3 + 5t^2 + t + 1)P_4(t)^3}{t(t^2 - 3t - 1)^{13}} \\
J_5(t) &= \frac{(t^4 - t^3 + 5t^2 + t + 1)P_5(t)^3}{t^{13}(t^2 - 3t - 1)} &J_6(t)& = \frac{(t^2 + 5t + 13)P_6(t)^3}{t}
\end{align*}
for some $t\in \mQ$ and each respective $i$. Furthermore, \cite[Theorem~1.8(iii)]{zywinapossible} gives explicit conditions on when $\rho_{E,13}(G_{\mQ})$ is conjugate to $H_{\brk{i,j},13}$ for $i = 4,5$ and $j = 1,2$, and \cite[Theorem~1.8(iv)]{zywinapossible} gives necessary numerical conditions for when $\rho_{E,13}(G_{\mQ})$ is conjugate to $G_{7,13}$. The case $\ell = 13$ is the first case for which Zywina does not give a complete description, which is due to three outstanding cases (see Section \ref{subsec:cursed}). Furthermore, the author gives equations for the modular curves $X_H(\ell)$ of level $\ell$ where $\ell$ is a primes $\leq 37$ and $H$ is an applicable subgroup of $\GL_2(\ZZ{\ell})$. 

\clearpage
\bibliographystyle{alpha}
\def\bibfont{\small}
\bibliography{CI.bib} 
\end{document}